\documentclass{amsart}
\usepackage{setspace, amsmath, amsthm, amssymb, amsfonts, amscd, epic, graphicx, ulem, dsfont}
\usepackage[T1]{fontenc}
\usepackage{multirow}
\usepackage{bbm}
\usepackage{enumerate}

\makeatletter \@namedef{subjclassname@2010}{
  \textup{2010} Mathematics Subject Classification}
\makeatother

\newtheorem{thm}{Theorem}[section]
\newtheorem{cor}[thm]{Corollary}
\newtheorem{lem}[thm]{Lemma}
\newtheorem{pro}[thm]{Proposition}

\theoremstyle{remark}
\newtheorem*{rema}{Remark}

\theoremstyle{definition}
\newtheorem*{defn}{Definition}
\newtheorem{exa}[thm]{\textbf{Example}}

\newcommand{\Ima}{\text{\rm{Im}}}
\newcommand{\REAL}{\text{\rm{Re}}}

\newcommand{\R}{\mathbb{R}}

\newcommand{\N}{\mathbb{N}}

\newcommand{\C}{\mathbb{C}}

\begin{document}

\title[Invertibility of Sums]{On the Invertibility of the Sum of Operators}
\author[M. H. MORTAD]{Mohammed Hichem Mortad}

\dedicatory{}
\thanks{}
\date{}
\keywords{Invertibility. Absolute value. Normal, self-adjoint and
positive operators. Square roots. Cartesian decomposition. Bounded
and unbounded operators}

\subjclass[2010]{Primary 47A05, Secondary 47B65, 47B25, 47A10.}

 \address{ Department of
Mathematics, University of Oran 1, Ahmed Ben Bella, B.P. 1524, El
Menouar, Oran 31000, Algeria.\newline {\bf Mailing address}:
\newline Pr Mohammed Hichem Mortad \newline BP 7085 Seddikia Oran
\newline 31013 \newline Algeria}

\email{mhmortad@gmail.com, mortad.hichem@univ-oran1.dz.}

\begin{abstract}
The primary purpose of this paper is to investigate the question of
invertibility of the sum of operators. The setting is bounded and
unbounded linear operators. Some interesting examples and
consequences are given. As an illustrative point, we characterize
invertibility for the class of normal operators. Also, we give a
very short proof of the self-adjointness of a normal operator when
the latter has a real spectrum.
\end{abstract}

\maketitle

\section{Introduction}

Let $H$ be a complex Hilbert space and let $B(H)$ denote the algebra
of all bounded linear operators on $H$.  An $A\in B(H)$ is called
positive (symbolically $A\geq0$) if
\[\langle Ax,x\rangle\geq0~\forall x\in H.\]
By a square root of $A\in B(H)$, we mean a $B\in B(H)$ such that
$B^2=A$. If $A\geq0$, then there is one and only one $B\geq0$ such
that $B^2=A$. This positive $B$ is denoted by $\sqrt A$.

Recall that any $T\in B(H)$ is expressible as $T=A+iB$ where $A,B\in
B(H)$ are self-adjoint. Besides,
\[A=\REAL T=\frac{T+T^*}{2} \text{ and } B=\Ima T=\frac{T-T^*}{2i}.\]
It is readily verifiable that $T$ is normal iff $AB=BA$.

We also recall some known results which will be called on below
(these are standard facts, see \cite{Mortad-Oper-TH-BOOK-WSPC} for
proofs).

\begin{thm}\label{theorem squrae root first} Let $A,B\in B(H)$. Then
\[0\leq A\leq B\Longrightarrow \sqrt A\leq \sqrt B.\]
\end{thm}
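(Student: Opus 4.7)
The plan is to prove this Löwner--Heinz inequality (for exponent $1/2$) via an integral representation of the positive square root. The key scalar identity is
\[\sqrt{a}=\frac{1}{\pi}\int_0^\infty\sqrt{\lambda}\left(\frac{1}{\lambda}-\frac{1}{\lambda+a}\right)d\lambda,\qquad a>0,\]
verified by the substitution $u=\sqrt{\lambda}$. As a preliminary step, I would reduce to the case where $A$ and $B$ are invertible by replacing them with $A_\varepsilon=A+\varepsilon I$, $B_\varepsilon=B+\varepsilon I$; the hypothesis $0\le A_\varepsilon\le B_\varepsilon$ is preserved and both operators become strictly positive. Norm-continuity of the positive square root on the positive cone (a standard consequence of its uniqueness) then lets me recover the conclusion for $A,B$ by sending $\varepsilon\to 0^+$.

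Next I would promote the scalar identity to the operator identity
\[\sqrt{T}=\frac{1}{\pi}\int_0^\infty\sqrt{\lambda}\,\bigl[\lambda^{-1}I-(T+\lambda I)^{-1}\bigr]\,d\lambda,\qquad T>0,\]
via the continuous functional calculus, using the asymptotics $\lambda^{-1/2}$ near $0$ and $\lambda^{-3/2}$ near $\infty$ to guarantee norm convergence of the improper integral. The standard order-reversing property of inversion, $0<S\le U\Rightarrow U^{-1}\le S^{-1}$, applied to $S=A+\lambda I$ and $U=B+\lambda I$, yields $(B+\lambda I)^{-1}\le(A+\lambda I)^{-1}$ for every $\lambda>0$. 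Subtracting the two representations then gives
\[\sqrt{B}-\sqrt{A}=\frac{1}{\pi}\int_0^\infty\sqrt{\lambda}\,\bigl[(A+\lambda I)^{-1}-(B+\lambda I)^{-1}\bigr]\,d\lambda,\]
an integral of a pointwise positive operator-valued function, which is therefore itself positive.

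The main obstacle I anticipate is the clean setup of the operator-valued improper integral: the scalar identity, the order-reversal of inversion, and the functional calculus are each routine in isolation, but one must verify norm convergence of the integral in $B(H)$ and check that integration preserves the positive cone before the final subtraction step is legitimate. Once that scaffolding is in place, the proof collapses to combining inverse order-reversal with the representation, so essentially all the work is in Step two.
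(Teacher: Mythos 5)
Your argument is correct: the scalar identity $\sqrt a=\frac1\pi\int_0^\infty\sqrt\lambda\bigl(\lambda^{-1}-(\lambda+a)^{-1}\bigr)\,d\lambda$ checks out, the integrand has norm $O(\lambda^{-1/2})$ at $0$ and $O(\lambda^{-3/2})$ at $\infty$ so the operator integral converges in norm, antitonicity of inversion gives $(B+\lambda I)^{-1}\le(A+\lambda I)^{-1}$, and since the positive cone is norm-closed the integral of the pointwise positive difference is positive; the $\varepsilon$-regularization and the limit $\varepsilon\to0^+$ are harmless (for the step $\sqrt{A+\varepsilon I}\to\sqrt A$ you only need the functional calculus of $A$ itself, via $\sup_{t\ge0}|\sqrt{t+\varepsilon}-\sqrt t|\le\sqrt\varepsilon$, and in fact the uniform bound $\lambda^{-1/2}$ near $0$ means the integral representation already works without assuming invertibility). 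There is, however, nothing in the paper to compare against: the author states this theorem as a standard fact and refers to \cite{Mortad-Oper-TH-BOOK-WSPC} for a proof, giving none here. Your route is the classical L\"owner--Heinz integral-representation proof, and it has a bonus that fits the paper's own narrative: replacing the weight $\sqrt\lambda$ by the one in the representation of $t\mapsto t^\alpha$ gives, with no change in structure, the operator monotonicity of $t^\alpha$ for all $\alpha\in(0,1)$, i.e.\ exactly the Heinz inequality the author invokes right after the theorem to deduce the $n$th-root analogue of Lemma \ref{sqrt A+B Lemma}. More elementary proofs of the $\alpha=\tfrac12$ case exist (e.g.\ reducing to invertible $B$ and estimating the spectral radius of $B^{-1/2}A^{1/2}$), but they do not generalize as cleanly.
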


\begin{lem}\label{sqrt A+B Lemma}
Let $A,B\in B(H)$ be such that $AB=BA$ and $A,B\geq 0$. Then
$\sqrt{A+B}\leq \sqrt A+\sqrt B.$
\end{lem}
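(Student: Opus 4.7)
The plan is to reduce the inequality to an application of Theorem \ref{theorem squrae root first} after squaring the candidate upper bound $\sqrt A+\sqrt B$ and showing that its square dominates $A+B$.

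First I would establish the commutativity fact $\sqrt A \sqrt B=\sqrt B\sqrt A$. Since $AB=BA$ and $\sqrt A$ lies in the bicommutant of $A$ (equivalently, $\sqrt A$ is a norm-limit of polynomials in $A$ via the continuous functional calculus applied to $t\mapsto\sqrt t$ on the spectrum of $A$), $B$ commutes with $\sqrt A$. Repeating the same argument with $\sqrt A$ in place of $B$ shows $\sqrt A$ commutes with $\sqrt B$. This is the only step requiring any care, and it is the one I expect to be the main technical point the reader should be reminded of; everything else is algebraic.

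Next I would expand
\[
(\sqrt A+\sqrt B)^2=(\sqrt A)^2+\sqrt A\sqrt B+\sqrt B\sqrt A+(\sqrt B)^2=A+B+2\sqrt A\sqrt B,
\]
which is legitimate thanks to the commutation step above. Because $\sqrt A,\sqrt B\geq 0$ and they commute, their product $\sqrt A\sqrt B$ is again positive (standard fact: the product of two commuting positive operators is positive). Therefore
\[
0\leq A+B\leq A+B+2\sqrt A\sqrt B=(\sqrt A+\sqrt B)^2.
\]

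Finally, I would invoke Theorem \ref{theorem squrae root first} (monotonicity of the positive square root): applying it to the inequality $0\leq A+B\leq (\sqrt A+\sqrt B)^2$ yields
\[
\sqrt{A+B}\leq \sqrt{(\sqrt A+\sqrt B)^2}=\sqrt A+\sqrt B,
\]
where the last equality uses uniqueness of the positive square root together with $\sqrt A+\sqrt B\geq 0$. This completes the argument.
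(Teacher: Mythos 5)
Your argument is correct and complete: the commutation of $\sqrt A$ with $\sqrt B$ via the functional calculus, the expansion $(\sqrt A+\sqrt B)^2=A+B+2\sqrt A\sqrt B\geq A+B$ using positivity of the product of commuting positive operators, and the appeal to Theorem \ref{theorem squrae root first} together with uniqueness of the positive square root is exactly the standard proof. The paper itself states this lemma without proof (referring to \cite{Mortad-Oper-TH-BOOK-WSPC}), so your write-up simply supplies the expected argument, and there is nothing to criticize.
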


Theorem \ref{theorem squrae root first} is known to hold for
$\alpha\in (0,1)$ instead of $\frac{1}{2}$ (the Heinz Inequality).
Hence we may easily establish the analogue of Lemma \ref{sqrt A+B
Lemma} for $n$th roots.

\begin{lem}
Let $A,B\in B(H)$ be such that $AB=BA$ and $A,B\geq 0$. If $n\in\N$,
then $(A+B)^{\frac{1}{n}}\leq A^{\frac{1}{n}}+B^{\frac{1}{n}}$.
\end{lem}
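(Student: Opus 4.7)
The plan is to mimic the proof strategy of Lemma \ref{sqrt A+B Lemma} but with the binomial theorem in place of the simple square expansion, and to finish by invoking the Heinz extension of Theorem \ref{theorem squrae root first} with exponent $\alpha=1/n$.

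First I would note that, under the hypotheses, $A^{1/n}$ and $B^{1/n}$ exist as the unique positive $n$th roots: indeed one may obtain them by applying $\sqrt{\cdot}$ repeatedly together with the Heinz-type functional calculus (or, more directly, by functional calculus on the commutative $C^*$-algebra generated by $A$ and $B$). Since $A$ and $B$ commute, so do $A^{1/n}$ and $B^{1/n}$ (continuous functions of commuting self-adjoint operators commute). In particular, products of nonnegative powers of $A^{1/n}$ and $B^{1/n}$ are again positive operators.

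Next I would expand by the binomial theorem, whose use is legitimated by $A^{1/n}B^{1/n}=B^{1/n}A^{1/n}$:
\[
(A^{1/n}+B^{1/n})^n=\sum_{k=0}^n \binom{n}{k}\,A^{k/n}B^{(n-k)/n}.
\]
The two extreme terms ($k=0$ and $k=n$) are exactly $B$ and $A$, while each of the remaining summands is a positive operator (product of commuting positive operators). Hence
\[
(A^{1/n}+B^{1/n})^n \geq A+B \geq 0.
\]

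Finally, I would apply the Heinz inequality (Theorem \ref{theorem squrae root first} with $\alpha=1/n\in(0,1]$) to the preceding display. This yields
\[
(A+B)^{1/n}\leq \bigl[(A^{1/n}+B^{1/n})^n\bigr]^{1/n}=A^{1/n}+B^{1/n},
\]
the last equality holding by the uniqueness of the positive $n$th root applied to the positive operator $A^{1/n}+B^{1/n}$. The main subtlety is verifying that $A^{1/n}$ and $B^{1/n}$ commute so that the binomial expansion is valid; once this is granted the argument is essentially forced.
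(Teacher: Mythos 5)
Your proof is correct and is exactly the argument the paper has in mind: the paper omits the proof, remarking only that the $n$th-root analogue of Lemma \ref{sqrt A+B Lemma} follows ``easily'' from the Heinz extension of Theorem \ref{theorem squrae root first}, and your binomial expansion of $(A^{1/n}+B^{1/n})^n$ (valid since the commuting positive roots commute, with positive cross terms) followed by operator monotonicity of $t\mapsto t^{1/n}$ is precisely that intended generalization of the square-root case.
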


Let us say a few more words about the absolute value of an operator
(that is, $|A|=\sqrt{A^*A}$ with $A\in B(H)$). It is well known that
the properties of the absolute value of complex numbers cannot all
just be carried over to $B(H)$ (even for self-adjoint operators).
This applies for example to the multiplicativity property and to
triangle inequalities. For counterexamples, readers may wish to
consult \cite{Mortad-Oper-TH-BOOK-WSPC}. See also
\cite{mortad-Abs-value-BD} to see when these results hold. The
similar question on unbounded operators may be found in the recent
work \cite{boucif-Dehimi-Mortad-ABS-VALUE}. Some results, however,
do hold without any special assumption. One of them is the following
simple result.

\begin{pro}\label{absol val ineq no assump PRO} Let $A,B\in B(H)$. Then
\[|A+B|^2\leq 2|A|^2+2|B|^2.\]
\end{pro}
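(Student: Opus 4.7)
The plan is to reduce the inequality to the obvious positivity of $|A-B|^2=(A-B)^*(A-B)$.

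First I would expand both sides. By definition $|A+B|^2=(A+B)^*(A+B)$, so
\[
|A+B|^2 = A^*A + A^*B + B^*A + B^*B.
\]
Subtracting this from $2|A|^2+2|B|^2 = 2A^*A+2B^*B$ gives
\[
2|A|^2+2|B|^2-|A+B|^2 = A^*A - A^*B - B^*A + B^*B = (A-B)^*(A-B).
\]

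Next, I would invoke the fact that $(A-B)^*(A-B)\geq 0$ for every $A,B\in B(H)$ (since $\langle (A-B)^*(A-B)x,x\rangle = \|(A-B)x\|^2\geq 0$). This immediately yields $|A+B|^2\leq 2|A|^2+2|B|^2$.

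There is really no obstacle here: the whole proof is a one-line algebraic identity (a noncommutative analogue of the scalar identity $2|a|^2+2|b|^2-|a+b|^2=|a-b|^2$), followed by the defining positivity property of an operator of the form $C^*C$. No commutativity or normality assumption is needed, which is precisely why the result holds in complete generality, as advertised in the paragraph preceding the proposition.
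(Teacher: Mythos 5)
Your proof is correct: the identity $2|A|^2+2|B|^2-|A+B|^2=(A-B)^*(A-B)=|A-B|^2\geq 0$ (the operator parallelogram law) is exactly the standard argument, and every step checks out. The paper itself states this proposition without proof, citing it as a known simple fact, so your one-line computation is precisely the proof being taken for granted there.
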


The following known result is also primordial.

\begin{pro}\label{invert positivity A B New Deh-Mortad PRO}(see \cite{Dehimi-Mortad-Reid} for a new proof)
Let $A,B\in B(H)$. If $0\leq A\leq B$ and if $A$ is invertible, then
$B$ is invertible and $B^{-1}\leq A^{-1}$.
\end{pro}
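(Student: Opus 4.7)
The plan is to handle the invertibility of $B$ first, and then obtain the norm-type inequality $B^{-1}\leq A^{-1}$ by a symmetric conjugation trick using $A^{-1/2}$.

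For invertibility, the key observation is that a positive invertible operator is bounded below by a positive multiple of the identity. Since $A\geq 0$ is invertible, its spectrum is a compact subset of $(0,\infty)$, hence lies in $[\varepsilon,\|A\|]$ for some $\varepsilon>0$. The continuous functional calculus then gives $A\geq \varepsilon I$. Combined with the hypothesis $A\leq B$, this yields $B\geq \varepsilon I$, and since $B\geq 0$ is in particular self-adjoint, this forces $B$ to be invertible with $B^{-1}\in B(H)$.

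For the inequality, I would exploit the fact that $A^{-1/2}$ exists and is positive (one may take it to be $\sqrt{A^{-1}}$, or equivalently $(\sqrt{A})^{-1}$). Conjugating the inequality $A\leq B$ by this self-adjoint operator preserves the order:
\[
I = A^{-1/2}AA^{-1/2}\leq A^{-1/2}BA^{-1/2}=:C.
\]
The operator $C$ is positive and invertible, and the relation $C\geq I$ transfers through the functional calculus to $C^{-1}\leq I$ (the spectrum of $C$ lies in $[1,\infty)$, hence that of $C^{-1}$ in $(0,1]$). Computing the inverse explicitly gives $C^{-1}=A^{1/2}B^{-1}A^{1/2}$, so
\[
A^{1/2}B^{-1}A^{1/2}\leq I.
\]
Conjugating once more by $A^{-1/2}$ produces $B^{-1}\leq A^{-1}$, which completes the argument.

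The only points that require some care are the two passages through the functional calculus (namely $A\geq\varepsilon I$ from positivity and invertibility, and $C\geq I\Longrightarrow C^{-1}\leq I$), and the standard fact that $X\leq Y\Longrightarrow T^*XT\leq T^*YT$ for arbitrary $T\in B(H)$, applied here with $T=T^*=A^{-1/2}$. Each of these is routine in the positive-operator calculus, so the proof is essentially a short manipulation once the conjugation idea is in place.
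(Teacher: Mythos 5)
Your argument is correct: the lower bound $A\geq\varepsilon I$ (hence $B\geq\varepsilon I$, so the self-adjoint $B$ is bounded below with dense range and thus invertible), the order-preservation under congruence $X\leq Y\Rightarrow T^*XT\leq T^*YT$, the identity $(A^{-1/2}BA^{-1/2})^{-1}=A^{1/2}B^{-1}A^{1/2}$, and the passage $C\geq I\Rightarrow C^{-1}\leq I$ are all sound, so the chain closes with $B^{-1}\leq A^{-1}$ after the final conjugation. Note, however, that the paper does not prove this proposition at all: it is quoted as a known result, with the citation pointing to a separate article where a new proof is obtained via generalizations of Reid's inequality. So your route is not the one alluded to there; it is the classical textbook argument by congruence with $A^{-1/2}$. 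That is perfectly acceptable and has the advantage of being short and self-contained, using only the positive square root and elementary spectral facts, whereas the Reid-inequality approach of the cited reference is of interest precisely because it avoids this standard conjugation trick and fits into a broader family of operator inequalities. One stylistic economy you could make: instead of invoking the functional calculus for $A\geq\varepsilon I$, observe directly that $\langle Ax,x\rangle=\|A^{1/2}x\|^{2}\geq\|A^{-1/2}\|^{-2}\|x\|^{2}$, which gives the same lower bound with $\varepsilon=\|A^{-1/2}\|^{-2}$.
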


We digress a little bit to notice a simple proof of the positiveness
of the spectrum of a positive operator using the previous
proposition: \textit{If $\lambda<0$, then $-\lambda I>0$ and so
$A-\lambda I\geq -\lambda I$ because $A\geq0$. Hence $A-\lambda I$
is invertible as $-\lambda I$ is, i.e. $\lambda\not\in\sigma(A)$.}

A simple application of the Functional Calculus for self-adjoint
operators is as follows.

\begin{exa}\label{A alpha geq A alpha smaller than one EXA} Let $A\in B(H)$ be such that $0\leq A\leq I$. If
$\alpha\in [0,1]$, then $A^{\alpha}\geq A$.
\end{exa}

From \cite{Kadison-Ringrose} we recall the following result.

\begin{pro}\label{spectra prod sum commut PRO}
Let $A,B\in B(H)$ be commuting. Then
\[\sigma(A+B)\subset \sigma(A)+\sigma(B)\]
where
\[\sigma(A)+\sigma(B)=\{\lambda+\mu:~\lambda\in \sigma(A),\mu\in\sigma(B)\}.\]
\end{pro}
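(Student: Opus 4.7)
The plan is to reduce to a commutative Banach algebra setting and invoke Gelfand theory. First, I would enlarge the unital subalgebra generated by $A$, $B$, and $I$ to a maximal commutative subalgebra $\mathcal{A}$ of $B(H)$, whose existence is guaranteed by Zorn's lemma applied to the poset of commutative unital subalgebras containing $\{A,B,I\}$. Such an $\mathcal{A}$ is automatically norm-closed (its closure is still commutative by continuity of multiplication) and contains $I$, hence is itself a commutative unital Banach algebra.

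Second, I would verify that the spectrum of any $T\in \mathcal{A}$ coincides whether computed in $\mathcal{A}$ or in $B(H)$. The inclusion $\sigma_{B(H)}(T)\subset \sigma_{\mathcal{A}}(T)$ is automatic. For the reverse inclusion, if $T\in \mathcal{A}$ is invertible in $B(H)$, then $T^{-1}$ commutes with every operator that commutes with $T$, and in particular with every element of $\mathcal{A}$; maximality then forces $T^{-1}\in \mathcal{A}$, so $T$ is already invertible in $\mathcal{A}$.

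Third, I would apply Gelfand's representation theorem, which states that for any element $T$ of a commutative unital Banach algebra $\mathcal{A}$,
\[
\sigma_{\mathcal{A}}(T)=\{\chi(T):\chi\in \Delta(\mathcal{A})\},
\]
where $\Delta(\mathcal{A})$ is the set of non-zero multiplicative linear functionals on $\mathcal{A}$. Given $\lambda\in\sigma(A+B)=\sigma_{\mathcal{A}}(A+B)$, pick a character $\chi$ with $\chi(A+B)=\lambda$. Linearity of $\chi$ combined with $\chi(A)\in\sigma_{\mathcal{A}}(A)=\sigma(A)$ and $\chi(B)\in\sigma(B)$ gives $\lambda=\chi(A)+\chi(B)\in\sigma(A)+\sigma(B)$, as required.

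The main obstacle is really just the background machinery assembled in the first two steps: the existence of a maximal commutative subalgebra containing $A$ and $B$ (which requires the commutation hypothesis) and the preservation of spectra between $\mathcal{A}$ and $B(H)$. Once those are in hand, Gelfand's theorem disposes of the conclusion at once. I would also remark that the very same argument yields the multiplicative analogue $\sigma(AB)\subset \sigma(A)\sigma(B)$ under the same commutation assumption, since $\chi$ is multiplicative as well as linear.
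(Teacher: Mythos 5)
Your proof is correct: the Zorn's lemma construction of a maximal commutative subalgebra, the spectral permanence argument via maximality, and the evaluation by characters together give exactly the inclusion $\sigma(A+B)\subset\sigma(A)+\sigma(B)$. Note that the paper does not prove this proposition at all---it cites it from Kadison--Ringrose---and your Gelfand-transform argument is precisely the standard proof being referenced, as the paper itself implicitly acknowledges when it later advertises Proposition \ref{sepctrum sum normal NEW PRO} as a related result that ``does not make use of the Gelfand Transform.''
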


We call the result in the previous proposition the "subadditivity of
the spectrum". There is also a "submultiplicativity of the
spectrum", that is,

\begin{pro}(\cite{Kadison-Ringrose}) \label{spectra PROD commut PRO}
Let $A,B\in B(H)$ be commuting. Then
\[\sigma(AB)\subset \sigma(A)\sigma(B)\]
where
\[\sigma(A)\sigma(B)=\{\lambda\mu:~\lambda\in \sigma(A),\mu\in\sigma(B)\}.\]
\end{pro}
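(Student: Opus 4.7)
The plan is to reduce the problem to commutative Banach algebra theory by enclosing $A$ and $B$ in a suitable commutative subalgebra of $B(H)$ and then applying the Gelfand representation.

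First, since $A$ and $B$ commute, an application of Zorn's lemma produces a maximal commutative subalgebra $\mathcal{A}\subset B(H)$ that contains $I$, $A$ and $B$ (and hence the product $AB$). Such a maximal commutative subalgebra is automatically norm-closed, so $\mathcal{A}$ is a commutative unital Banach algebra in its own right.

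The key step, and the one requiring some care, is the spectral permanence statement: for every $T\in\mathcal{A}$ one has $\sigma_\mathcal{A}(T)=\sigma_{B(H)}(T)$. The inclusion $\sigma_{B(H)}(T)\subset\sigma_\mathcal{A}(T)$ is automatic. For the reverse inclusion, suppose $T\in\mathcal{A}$ is invertible in $B(H)$; then $T^{-1}$ commutes with every operator commuting with $T$, and in particular it commutes with every element of $\mathcal{A}$, so by maximality $T^{-1}\in\mathcal{A}$. Thus $T$ is already invertible in $\mathcal{A}$. This is the subtle point, but it is an immediate consequence of the maximality of $\mathcal{A}$.

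With spectral permanence in hand, the conclusion is a one-line computation from the Gelfand representation. Let $\mathcal{M}(\mathcal{A})$ denote the space of characters of $\mathcal{A}$; then for every $T\in\mathcal{A}$ we have
\[
\sigma(T)\;=\;\sigma_\mathcal{A}(T)\;=\;\{\varphi(T):\varphi\in\mathcal{M}(\mathcal{A})\}.
\]
Now let $\lambda\in\sigma(AB)$. Choose $\varphi\in\mathcal{M}(\mathcal{A})$ with $\varphi(AB)=\lambda$. Since characters are multiplicative,
\[
\lambda\;=\;\varphi(A)\,\varphi(B),
\]
and $\varphi(A)\in\sigma(A)$, $\varphi(B)\in\sigma(B)$ by the same representation formula applied to $A$ and to $B$. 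Therefore $\lambda\in\sigma(A)\sigma(B)$, which is exactly the desired inclusion $\sigma(AB)\subset\sigma(A)\sigma(B)$. The commutativity hypothesis enters only through the existence of the algebra $\mathcal{A}$; once $\mathcal{A}$ is chosen, the rest is purely formal.
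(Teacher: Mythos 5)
Your proof is correct: the paper gives no proof of this proposition, citing \cite{Kadison-Ringrose} instead, and your argument---enclosing $A$ and $B$ in a maximal commutative (hence closed) subalgebra, deducing spectral permanence from maximality, and using multiplicativity of characters via the Gelfand representation---is precisely the standard argument behind that citation, which the paper itself alludes to later when it remarks that a related result ``does not make use of the Gelfand Transform.'' The one delicate point, that invertibility in $B(H)$ implies invertibility in $\mathcal{A}$, is handled correctly by your observation that $T^{-1}$ commutes with everything commuting with $T$.
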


In Proposition \ref{spectra prod sum equivalence PRO}, we show that
Proposition \ref{spectra PROD commut PRO} implies Proposition
\ref{spectra prod sum commut PRO} in the context of self-adjoint
operators and that the backward implication also holds but for
positive and invertible operators.

Recall also the following definition.

\begin{defn}\label{defn pos unbd schmud}
Let $T$ and $S$ be unbounded positive self-adjoint operators. We say
that $S\geq T$ if $D(S^{\frac{1}{2}})\subseteq D(T^{\frac{1}{2}})$ and $%
\left\| S^{\frac{1}{2}}x\right\| \geq \left\| T^{\frac{1}{2}%
}x\right\|$ for all $x\in D(S^{\frac{1}{2}}).$
\end{defn}

The "natural but weak extension" is defined as in Definition 10.5
(Page 230) in \cite{SCHMUDG-book-2012}: If $S$ and $T$ are
non-necessarily bounded symmetric operators, then  $S\succeq T$ if
$D(S)\subset D(T)$ and
\[\langle Sx,x\rangle\geq \langle Tx,x\rangle~\forall x\in D(S).\]

Notice that Proposition \ref{invert positivity A B New Deh-Mortad
PRO} remains valid for unbounded operators. Indeed, as on Page 200
in \cite{WEI}, if $S$ and $T$ are self-adjoint, $T$ is boundedly
invertible and $S\geq T\geq0$, then $S$ is boundedly invertible and
$S^{-1}\leq T^{-1}$.

Finally, we assume that readers are familiar with other basic
notions and results of Operator Theory.

The sum of two invertible operators is not necessarily invertible
even if strong conditions are imposed. For instance, if we take $A$
to be invertible and positive, then setting $B=-A$, we see that
$AB=BA$ and that $B$ is invertible. But plainly $A+B$ is not
invertible. Positivity must also be avoided as it may make some of
the results evident. For instance, if $A,B\in B(H)$ are such that
$A,B\geq 0$ and $A$ say is invertible, then obviously $A+B$ ($\geq
A$) is invertible by Proposition \ref{invert positivity A B New
Deh-Mortad PRO}. These two observations make the investigation of
this question a little hopeless. However, the approach considered by
Bikchentaev in \cite{Bikchentaev-invert-sum} deserves to be
investigated further. This is one aim of the paper. Another purpose
is to treat some of these questions in an unbounded setting. Some
interesting consequences, examples and counterexamples accompany our
results.

\section{Main Results}

\begin{thm}\label{invertibility sum abs valu BITChk THM}
Let $A,B\in B(H)$.
\begin{enumerate}
  \item If $A+B$ is invertible, then so is $|A|^2+|B|^2$.
  \item If $A+B$ is invertible, then so are $|A|+|B|$ and $|A|^{2^n}+|B|^{2^n}$ ($n\in\N$)  as
  well.
  \item Assume here that $A,B\geq 0$ and let $\alpha,\beta\in\C$.
  Then
  \[\alpha A+\beta B\text{ invertible }\Longrightarrow A+B \text{ invertible}.\]
\end{enumerate}
\begin{rema}
Most of the previous results appeared in
\cite{Bikchentaev-invert-sum}, but our proof is simpler.
\end{rema}
\end{thm}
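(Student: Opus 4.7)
The plan is to reduce all three parts to the combination of two earlier tools: Proposition \ref{absol val ineq no assump PRO} (giving $|X+Y|^2\leq 2|X|^2+2|Y|^2$) and Proposition \ref{invert positivity A B New Deh-Mortad PRO} (which propagates invertibility upward in the operator order $0\leq\cdot\leq\cdot$). The recurring observation is that if $T\in B(H)$ is invertible, then so is $T^*T=|T|^2$.

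For part (1), I would apply Proposition \ref{absol val ineq no assump PRO} to obtain $0\leq|A+B|^2\leq 2(|A|^2+|B|^2)$. Since invertibility of $A+B$ forces that of $|A+B|^2=(A+B)^*(A+B)$, Proposition \ref{invert positivity A B New Deh-Mortad PRO} transports invertibility upward to $2(|A|^2+|B|^2)$, yielding the claim.

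For part (2), I would first handle $|A|+|B|$ by going in the opposite order. Since $0\leq|A|\leq\||A|\|\,I$, the fact that commuting positives multiply to a positive gives $|A|^2\leq\||A|\|\,|A|\leq\||A|\|(|A|+|B|)$, and likewise for $|B|^2$. Adding produces $|A|^2+|B|^2\leq(\||A|\|+\||B|\|)(|A|+|B|)$. Combining with part (1) and Proposition \ref{invert positivity A B New Deh-Mortad PRO} yields invertibility of $|A|+|B|$, the scalar factor being nonzero (otherwise $A+B=0$). The $|A|^{2^n}+|B|^{2^n}$ statement then follows by induction on $n$: at the inductive step one applies part (1) with the positive operators $|A|^{2^{n-1}}$ and $|B|^{2^{n-1}}$ in place of $A$ and $B$, using that the absolute value of a positive operator coincides with itself, so the $2^{n-1}$-th power hypothesis upgrades to the $2^n$-th.

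For part (3), again apply Proposition \ref{absol val ineq no assump PRO}, this time to $\alpha A$ and $\beta B$. Because $A,B\geq 0$, $|\alpha A|^2=|\alpha|^2A^2$ and $|\beta B|^2=|\beta|^2B^2$, whence $|\alpha A+\beta B|^2\leq 2M(A^2+B^2)$ with $M=\max(|\alpha|^2,|\beta|^2)>0$ (the case $\alpha=\beta=0$ is excluded by the invertibility hypothesis). Invertibility of $\alpha A+\beta B$ gives that of the left hand side, so Proposition \ref{invert positivity A B New Deh-Mortad PRO} delivers invertibility of $A^2+B^2=|A|^2+|B|^2$, and the argument of part (2) then produces invertibility of $A+B=|A|+|B|$. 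The main point to watch throughout is directional bookkeeping: Proposition \ref{invert positivity A B New Deh-Mortad PRO} requires the invertible quantity to sit on the smaller side of the inequality, so at each step one must check which of the two inequalities $|A|^2+|B|^2\lessgtr c(|A|+|B|)$ is available and pair it with the correct hypothesis.
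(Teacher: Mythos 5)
Your proposal is correct and follows essentially the same route as the paper: all three parts rest on Proposition \ref{absol val ineq no assump PRO} combined with Proposition \ref{invert positivity A B New Deh-Mortad PRO}, together with an inequality of the form $|A|^2+|B|^2\leq c\,(|A|+|B|)$ and the same induction for $|A|^{2^n}+|B|^{2^n}$. The only cosmetic differences are that you keep the explicit constant $c=\||A|\|+\||B|\|$ instead of normalizing $\|A\|,\|B\|\leq 1$ and invoking the functional calculus, and in (3) you re-run the squared inequality on $\alpha A,\beta B$ and then reuse the part-(2) mechanism rather than quoting part (2) verbatim; both variants are valid and in fact treat the degenerate scalar cases slightly more explicitly than the paper does.
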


\begin{proof} (1): It is clear that
  \[A+B \text{ invertible }
  \Longrightarrow |A+B|^2 \text{ invertible }
  \Longrightarrow |A|^2+|B|^2 \text{ invertible}\]
  by Propositions \ref{absol val ineq no assump PRO} \& \ref{invert positivity A B New Deh-Mortad PRO}.

(2): By the first property, $|A|^2+|B|^2$ is
    invertible from which we readily get that $|A|^4+|B|^4$ is
    invertible and, by induction, we establish the invertibility of
    $|A|^{2^n}+|B|^{2^n}$.

    The invertibility of $|A|+|B|$ is not hard to prove.
    WLOG we may assume that $\|A\|\leq 1$ and $\|B\|\leq 1$. Hence
    $A^*A\leq I$ and $B^*B\leq I$,
and so
    $|A|\leq I$ and $|B|\leq I$
    by Theorem \ref{theorem squrae root first}. By the Functional
    Calculus,
    $|A|^2\leq |A|$  and $|B|^2\leq |B|$.
    This implies that
    \[|A|+|B|\geq |A|^2+|B|^2\geq 0.\]
   Proposition \ref{invert positivity A B New Deh-Mortad PRO} allows us to confirm the invertibility of $|A|+|B|$, as
   desired.

  (3): Since $\alpha A+\beta B$ is invertible, by the previous
  property, so is $|\alpha| |A|+|\beta| |B|$ or merely $|\alpha|
  A+|\beta| B$ as $A,B\geq 0$. Since we can assume $|\alpha|\geq
  |\beta|>0$, we infer that
  \[|\alpha|(A+B)\geq |\alpha|
  A+|\beta| B.\]
  Consequently, $|\alpha|(A+B)$ or simply $A+B$
  is invertible.
\end{proof}

\begin{cor}
Let $A\in B(H)$ be invertible. Then $|A-B|+|B|$ is invertible for
every $B\in B(H)$.
\end{cor}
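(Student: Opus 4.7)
The plan is to apply part (2) of Theorem \ref{invertibility sum abs valu BITChk THM} directly, using the trivial algebraic identity
\[(A-B)+B=A.\]
Setting $C=A-B$ and $D=B$ in that theorem, the hypothesis $C+D=A$ is invertible is exactly what we are given. The conclusion $|C|+|D|$ invertible then reads $|A-B|+|B|$ invertible, which is precisely the corollary.

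So the proof is essentially a single line: write $A=(A-B)+B$, note that this sum is invertible by assumption, and invoke part (2) of Theorem \ref{invertibility sum abs valu BITChk THM} applied to $A-B$ and $B$ in place of the two operators there. There is no real obstacle; the only thing worth emphasizing is that $B\in B(H)$ is arbitrary, so $A-B\in B(H)$ as well, and the hypotheses of the theorem require nothing beyond boundedness.
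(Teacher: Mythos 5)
Your proof is correct and is exactly the paper's argument: decompose $A=(A-B)+B$ and apply part (2) of Theorem \ref{invertibility sum abs valu BITChk THM} to the pair $A-B$, $B$. Nothing further is needed.
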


\begin{proof}
Since $A$ is invertible and $A=(A-B)+B$, it follows that $|A-B|+|B|$
too is invertible by the previous result.
\end{proof}

\begin{rema}
It is clear that if $T$ is invertible, then the self-adjoint $\REAL
T+\Ima T$ need not be invertible. For instance:
\end{rema}

\begin{exa}
Let $A$ be self-adjoint and invertible and set $B=-A$. Then
\[A+iB=A-iA=(1-i)A\]
is invertible while $A+B=0$ is not.
\end{exa}

Nonetheless, we have the following.

\begin{cor}
Let $A\in B(H)$ be invertible. Then $|\REAL A|+|\Ima A|$ is
invertible.
\end{cor}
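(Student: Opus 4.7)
The plan is to reduce the statement directly to part (2) of Theorem \ref{invertibility sum abs valu BITChk THM} via the Cartesian decomposition. Writing $A = \REAL A + i \Ima A$ where both $\REAL A$ and $\Ima A$ are self-adjoint, I would set $X = \REAL A$ and $Y = i \Ima A$, so that $X + Y = A$ is invertible by hypothesis. Applying Theorem \ref{invertibility sum abs valu BITChk THM}(2) to the pair $X, Y \in B(H)$ then yields that $|X| + |Y| = |\REAL A| + |i \Ima A|$ is invertible.

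The only remaining step is to identify $|i \Ima A|$ with $|\Ima A|$. This is immediate: since $\Ima A$ is self-adjoint, $(i \Ima A)^* = -i \Ima A$, hence
\[
(i \Ima A)^* (i \Ima A) = (-i \Ima A)(i \Ima A) = (\Ima A)^2,
\]
and therefore $|i \Ima A| = \sqrt{(\Ima A)^2} = |\Ima A|$. Combining with the previous step gives the invertibility of $|\REAL A| + |\Ima A|$.

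I do not anticipate any serious obstacle here, since Theorem \ref{invertibility sum abs valu BITChk THM}(2) does all the real work and the rest is a cosmetic observation about the absolute value of a scalar multiple of a self-adjoint operator. The only thing worth being careful about is not to be misled by the preceding remark/example, which only rules out $\REAL A + \Ima A$ being invertible; replacing the sum by the sum of absolute values is precisely what saves the statement, and that is exactly the content of Theorem \ref{invertibility sum abs valu BITChk THM}(2).
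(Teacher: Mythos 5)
Your proof is correct and follows exactly the paper's route: decompose $A=\REAL A+i\Ima A$ and apply Theorem \ref{invertibility sum abs valu BITChk THM}(2) to the two summands. The extra observation that $|i\Ima A|=|\Ima A|$ is a detail the paper leaves implicit, and spelling it out does no harm.
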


\begin{proof}
Just write $A=\REAL A+i\Ima A$, then apply Theorem
\ref{invertibility sum abs valu BITChk THM}.
\end{proof}

The next corollary appeared in \cite{Bikchentaev-invert-sum}.

\begin{cor}
Let $A,B\in B(H)$ be such that $A+B$ is invertible. If $p,q\in
(0,\infty)$, then $|A|^p+|B|^q$ is invertible.
\end{cor}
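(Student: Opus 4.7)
The plan is to sandwich $|A|^p+|B|^q$ between two invertible positive operators, namely $|A|^{2^n}+|B|^{2^n}$ for a suitably large $n$, whose invertibility is already guaranteed by part (2) of Theorem \ref{invertibility sum abs valu BITChk THM}. The key functional-calculus fact I will exploit is that if $0\leq T\leq I$ and $0<r\leq s$, then $T^s\leq T^r$, which follows from $t^s\leq t^r$ on $[0,1]$.

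The first step is to rescale. Set $c=\max(\|A\|,\|B\|,1)$ and replace $A,B$ by $A/c,B/c$; the sum $(A+B)/c$ remains invertible, and $|A/c|,|B/c|\leq I$ by Theorem \ref{theorem squrae root first} (exactly as in the proof of part (2) of Theorem \ref{invertibility sum abs valu BITChk THM}). Next, choose $n\in\N$ with $2^n\geq\max(p,q)$. The functional calculus then yields
\[
0\leq |A/c|^{2^n}+|B/c|^{2^n}\leq |A/c|^p+|B/c|^q.
\]
The left-hand side is invertible by part (2) of Theorem \ref{invertibility sum abs valu BITChk THM}, so Proposition \ref{invert positivity A B New Deh-Mortad PRO} implies that
\[
|A/c|^p+|B/c|^q=\frac{1}{c^p}|A|^p+\frac{1}{c^q}|B|^q
\]
is invertible as well.

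To finish, I would invoke part (3) of Theorem \ref{invertibility sum abs valu BITChk THM}, applied with the positive operators $|A|^p,|B|^q\geq 0$ and scalars $\alpha=c^{-p}$, $\beta=c^{-q}$, to deduce that $|A|^p+|B|^q$ is itself invertible. The only mildly delicate point is this last step: because $p$ and $q$ may differ, the scaling factor does not enter uniformly and one cannot simply clear denominators by multiplying through. Part (3) of the preceding theorem is precisely the tool that allows one to strip off unequal positive coefficients from positive operators, so it fits the situation perfectly.
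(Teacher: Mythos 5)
Your proof is correct and follows essentially the same route as the paper's: normalize so that the absolute values are dominated by $I$, compare $|A|^p+|B|^q$ with $|A|^{2^n}+|B|^{2^n}$ for $2^n\geq \max(p,q)$ via the functional calculus, and conclude from part (2) of Theorem \ref{invertibility sum abs valu BITChk THM} together with Proposition \ref{invert positivity A B New Deh-Mortad PRO}. The only difference is that the paper simply writes ``WLOG $|A|\leq I$ and $|B|\leq I$,'' whereas you justify the rescaling explicitly, noting that it produces the unequal coefficients $c^{-p}$ and $c^{-q}$ and removing them with part (3) of the theorem --- a detail the paper leaves implicit.
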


\begin{proof}
WLOG, we assume that $|A|\leq I$ and $|B|\leq I$. We can always find
an $n\in\N$ such that $p,q\leq 2^n$. Then by Example \ref{A alpha
geq A alpha smaller than one EXA}, we have
\[|A|^{\frac{p}{2^n}}\geq |A| \text{ and } |B|^{\frac{q}{2^n}}\geq |B|.\]
Therefore,
\[|A|^p\geq |A|^{{2^n}} \text{ and } |B|^q\geq |B|^{{2^n}}.\]
Thus,
\[|A|^p+|B|^q\geq |A|^{{2^n}}+|B|^{{2^n}}.\]
 Since $|A|^{2^n}+|B|^{2^n}$ is already invertible (Theorem
\ref{invertibility sum abs valu BITChk THM}), we obtain the
invertibility of $|A|^p+|B|^q$ from Proposition \ref{invert
positivity A B New Deh-Mortad PRO}.
\end{proof}

Now, we give an interesting consequence on the invertibility of
matrices of bounded operators.

\begin{cor}
Let $A,B,C,D\in B(H)$ and define $T\in B(H\oplus H)$ by
\[T=\left(
      \begin{array}{cc}
        A & B \\
        C & D \\
      \end{array}
    \right).
\]
If $T$ is invertible, then so are $|A|+|C|$ and $|B|+|D|$.

In particular, if $D$ is normal and $B=0$ (resp. if $A$ is normal
and $C=0$), then
\[\sigma(D)~(resp.~ \sigma(A))\subset \sigma(T).\]
\end{cor}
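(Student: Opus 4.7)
The plan is to decompose $T$ as a sum of two operators on $H\oplus H$ so that Theorem \ref{invertibility sum abs valu BITChk THM} applies directly, and then read off what the resulting block structure forces. Split $T$ column-wise as $T=T_1+T_2$ with
\[
T_1=\begin{pmatrix} A & 0 \\ C & 0 \end{pmatrix},\qquad T_2=\begin{pmatrix} 0 & B \\ 0 & D \end{pmatrix}.
\]
A direct computation gives
\[
T_1^*T_1=\begin{pmatrix} |A|^2+|C|^2 & 0 \\ 0 & 0 \end{pmatrix},\qquad T_2^*T_2=\begin{pmatrix} 0 & 0 \\ 0 & |B|^2+|D|^2 \end{pmatrix},
\]
so $|T_1|$ and $|T_2|$ are block-diagonal with the single nonzero entries $\sqrt{|A|^2+|C|^2}$ and $\sqrt{|B|^2+|D|^2}$ sitting in complementary slots.

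Assuming $T$ invertible, Theorem \ref{invertibility sum abs valu BITChk THM}(2), applied with $A\leftrightarrow T_1$ and $B\leftrightarrow T_2$, delivers invertibility of $|T_1|+|T_2|$. This operator is block-diagonal, hence invertible iff each diagonal entry is; therefore $|A|^2+|C|^2$ and $|B|^2+|D|^2$ are invertible on $H$. To descend from these squared sums to $|A|+|C|$ and $|B|+|D|$, I would reuse the normalization trick from the proof of Theorem \ref{invertibility sum abs valu BITChk THM}(2): after rescaling so that $\|A\|,\|C\|\leq 1$, Theorem \ref{theorem squrae root first} yields $|A|,|C|\leq I$, the Functional Calculus then gives $|A|^2\leq|A|$ and $|C|^2\leq|C|$, and therefore $|A|+|C|\geq |A|^2+|C|^2$. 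Proposition \ref{invert positivity A B New Deh-Mortad PRO} now transfers invertibility from the right-hand side to the left; the analogous argument handles $|B|+|D|$.

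For the spectral inclusion, take $\lambda\notin\sigma(T)$ in the case $B=0$ with $D$ normal. The matrix
\[
T-\lambda I_{H\oplus H}=\begin{pmatrix} A-\lambda I & 0 \\ C & D-\lambda I \end{pmatrix}
\]
is invertible, and applying the first part of the corollary to it (with the role of $B$ played by $0$ and of $D$ by $D-\lambda I$) gives invertibility of $|0|+|D-\lambda I|=|D-\lambda I|$. Since $D-\lambda I$ is normal, one has $\|(D-\lambda I)x\|=\||D-\lambda I|x\|$ for every $x$, together with $\ker(D-\lambda I)=\ker(D-\lambda I)^*$; these force $D-\lambda I$ to be bounded below and to have dense range, hence invertible. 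Thus $\lambda\notin\sigma(D)$, giving $\sigma(D)\subset\sigma(T)$. The case $C=0$ with $A$ normal is symmetric.

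The only subtle point is the descent from $|A|^2+|C|^2$ invertible to $|A|+|C|$ invertible: the needed domination $|A|+|C|\geq|A|^2+|C|^2$ holds only after the norm-one normalization, and this is where the nonlinearity of the absolute value interacts nontrivially with the sum.
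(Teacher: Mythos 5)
Your argument is correct, but it takes a different decomposition from the paper's and therefore needs an extra step. You split $T$ column-wise, so $|T_1|$ and $|T_2|$ carry the blocks $\sqrt{|A|^2+|C|^2}$ and $\sqrt{|B|^2+|D|^2}$ in complementary corners; after invoking Theorem \ref{invertibility sum abs valu BITChk THM} you therefore only obtain invertibility of $|A|^2+|C|^2$ and $|B|^2+|D|^2$, and you must then repeat the normalization/functional-calculus descent ($\|A\|,\|C\|\leq 1$, hence $|A|^2\leq|A|$, $|C|^2\leq|C|$, then Proposition \ref{invert positivity A B New Deh-Mortad PRO}) to reach $|A|+|C|$ and $|B|+|D|$; this rescaling is indeed harmless since both sums only change by positive scalar factors. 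The paper instead splits $T$ into its diagonal part $S$ (blocks $A,D$) and antidiagonal part $R$ (blocks $B,C$): then $|S|$ and $|R|$ are block-diagonal with entries $|A|,|D|$ and $|C|,|B|$, so $|S|+|R|$ is exactly the block-diagonal operator with entries $|A|+|C|$ and $|B|+|D|$, and a single application of Theorem \ref{invertibility sum abs valu BITChk THM}(2) finishes the first claim, the passage from squares to first powers being already packaged inside that theorem. Your route does yield the intermediate invertibility of $|A|^2+|C|^2$ and $|B|^2+|D|^2$ as a byproduct, at the cost of redoing the rescaling trick. For the spectral inclusion both arguments are the same contrapositive; you merely spell out the fact, cited without proof in the paper, that a normal operator is invertible iff its absolute value is (via $\|Nx\|=\||N|x\|$ and $\ker N=\ker N^*$), and that detail is correct.
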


\begin{proof}Write
\[T=\underbrace{\left(
      \begin{array}{cc}
        A & 0 \\
        0 & D \\
      \end{array}
    \right)}_{S}+\underbrace{\left(
      \begin{array}{cc}
        0 & B \\
        C & 0 \\
      \end{array}
    \right)}_{R}.\]
Since $T$ is invertible, so is $|S|+|R|$. But
\[|S|=\left(
      \begin{array}{cc}
        |A| & 0 \\
        0 & |D| \\
      \end{array}
    \right)\text{ and }|R|=\left(
      \begin{array}{cc}
        |C| & 0 \\
        0 & |B| \\
      \end{array}
    \right)\]
    and so
    \[\left(
      \begin{array}{cc}
        |A|+|C| & 0 \\
        0 & |B|+|D| \\
      \end{array}
    \right)\]
    is invertible. This means that $|A|+|C|$ and $|B|+|D|$ are
    invertible.

    For the last claim just reason contrapositively by remembering
    that a normal operator is invertible iff its absolute value is.
\end{proof}

\begin{pro}\label{mmmmmmm}
Let $A,B\in B(H)$ be such that $AB=BA$ and either $A$ or $B$ is
normal. Then
\[|A|+|B| \text{ is invertible} \Longleftrightarrow |A|^2+|B|^2 \text{ is invertible}.\]
\end{pro}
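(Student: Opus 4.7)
The plan is to handle the two directions separately: the forward direction is essentially already in the bag from Theorem \ref{invertibility sum abs valu BITChk THM}, and the backward direction reduces to a clever application of Lemma \ref{sqrt A+B Lemma} once one has commutativity of $|A|$ and $|B|$.

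For the forward implication, I would simply apply Theorem \ref{invertibility sum abs valu BITChk THM}(1) to the pair $(|A|,|B|)$ in place of $(A,B)$. Since $|A|$ and $|B|$ are self-adjoint, $||A||^{2}=|A|^{2}$ and $||B||^{2}=|B|^{2}$, so if $|A|+|B|$ is invertible, that theorem yields the invertibility of $|A|^{2}+|B|^{2}$. Observe that this direction does not even use the hypothesis that $AB=BA$ or that one of $A,B$ is normal.

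For the reverse implication, I would first establish that $|A|$ and $|B|$ commute. Assuming WLOG that $A$ is normal, Fuglede's theorem applied to $AB=BA$ gives $A^{*}B=BA^{*}$, and combining these yields $A^{*}AB=BA^{*}A$, that is, $|A|^{2}$ commutes with $B$. Since $|A|$ lies in the (commutative) $C^{*}$-algebra generated by $|A|^{2}$, it follows that $|A|$ commutes with $B$; taking adjoints gives that $|A|$ commutes with $B^{*}$, hence with $|B|^{2}=B^{*}B$, and therefore with $|B|$ by the same functional-calculus argument. Now both $|A|,|B|\geq 0$ and they commute, so Lemma \ref{sqrt A+B Lemma} yields
\[
\sqrt{|A|^{2}+|B|^{2}}\;\leq\;\sqrt{|A|^{2}}+\sqrt{|B|^{2}}\;=\;|A|+|B|.
\]
Assuming $|A|^{2}+|B|^{2}$ is invertible (and positive), its positive square root is also invertible, and then Proposition \ref{invert positivity A B New Deh-Mortad PRO} applied to $0\leq \sqrt{|A|^{2}+|B|^{2}}\leq |A|+|B|$ gives the invertibility of $|A|+|B|$.

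The main obstacle is the commutation of $|A|$ and $|B|$ from the one-sided normality hypothesis; this is where Fuglede--Putnam does the real work, since $AB=BA$ alone is not enough to guarantee that the absolute values commute. Once that commutation is secured, the rest is a direct application of the two standard tools (Lemma \ref{sqrt A+B Lemma} and Proposition \ref{invert positivity A B New Deh-Mortad PRO}) already assembled in the introduction.
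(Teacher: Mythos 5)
Your proposal is correct and follows essentially the same route as the paper: the forward direction via Theorem \ref{invertibility sum abs valu BITChk THM} applied to the pair $(|A|,|B|)$, and the reverse direction via Fuglede to get commutation of the absolute values, then Lemma \ref{sqrt A+B Lemma} to obtain $\sqrt{|A|^2+|B|^2}\leq |A|+|B|$ and Proposition \ref{invert positivity A B New Deh-Mortad PRO} to conclude. Your write-up is in fact slightly more detailed than the paper's, which only invokes ``known techniques'' for the commutativity step you spell out with the functional calculus.
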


\begin{proof}We already proved the implication "$\Rightarrow$" in Theorem
\ref{invertibility sum abs valu BITChk THM}.

Assume now that $|A|^2+|B|^2$ is invertible. Since $A$ commutes with
$B$, it follows by the Fuglede Theorem that $A^*B=BA^*$. Hence by
known techniques, we may get the commutativity of $|A|^2$ and
$|B|^2$. Therefore, from Lemma \ref{sqrt A+B Lemma} we obtain
\[\sqrt {|A|^2}+\sqrt{|B|^2}\geq \sqrt{|A|^2+|B|^2}.\]
Hence
\[|A|+|B|\geq \sqrt{|A|^2+|B|^2}.\]
Since $\sqrt{|A|^2+|B|^2}$ is invertible, Proposition \ref{invert
positivity A B New Deh-Mortad PRO} gives the invertibility of
$|A|+|B|$, as wished.
\end{proof}

\begin{rema}
The invertibility of $A^2+B^2$ does not yield that of $A+B$ even in
the event of the self-adjointness of $A$ and $B$. As a
counterexample, just consider an invertible and self-adjoint $A$
such that $A=-B$.
\end{rema}

\begin{exa}
Let $A\in B(H)$. We know that $\cos^2A+\sin^2A=I$.  It then follows that $|\cos A|+|\sin A|$ is invertible.
\end{exa}

\begin{exa}
Let $A$ be self-adjoint and invertible. Set $B=iA$. Then $A^2+B^2=0$
is obviously not invertible.
\end{exa}

The idea of the proof of Proposition \ref{mmmmmmm} leads to the
following generalization.

\begin{pro}
Let $A,B\in B(H)$ be such that $AB=BA$ and either $A$ or $B$ is
normal. Then
\[|A|+|B| \text{ is invertible} \Longleftrightarrow |A|^n+|B|^n \text{ is invertible}\]
where $n\in\N$.
\end{pro}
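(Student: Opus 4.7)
The plan is to mimic the proof of Proposition \ref{mmmmmmm} with square roots replaced by $n$-th roots. The preliminary observation is that $|A|$ and $|B|$ commute: since $AB=BA$ and, say, $A$ is normal, Fuglede's theorem gives $A^*B=BA^*$, so $|A|^2=A^*A$ commutes with both $B$ and $B^*$, and hence with $|B|^2$. The continuous functional calculus applied to the positive operators $|A|^2,|B|^2$ then yields $[|A|,|B|]=0$, and consequently $[|A|^n,|B|^n]=0$.

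For the forward direction, I would apply Theorem \ref{invertibility sum abs valu BITChk THM}(2) to the pair $|A|,|B|$ (whose sum is invertible by hypothesis), obtaining the invertibility of $|A|^{2^k}+|B|^{2^k}$ for every $k\in\N$. Given $n\in\N$, pick $k$ with $2^k\geq n$; after normalizing $\|A\|,\|B\|\leq 1$ (which affects neither hypothesis nor conclusion up to a positive scalar), Example \ref{A alpha geq A alpha smaller than one EXA} supplies $|A|^n\geq |A|^{2^k}$ and $|B|^n\geq |B|^{2^k}$, so that
\[|A|^n+|B|^n\geq |A|^{2^k}+|B|^{2^k},\]
and Proposition \ref{invert positivity A B New Deh-Mortad PRO} transfers invertibility to the larger operator. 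This half does not actually require the commutativity hypothesis.

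For the reverse direction, I would invoke the $n$-th root analogue of Lemma \ref{sqrt A+B Lemma} stated just after it in the paper: applied to the commuting positive operators $|A|^n,|B|^n$, it gives
\[(|A|^n+|B|^n)^{1/n}\leq (|A|^n)^{1/n}+(|B|^n)^{1/n}=|A|+|B|.\]
Invertibility of $|A|^n+|B|^n$ forces its spectrum to stay bounded away from $0$ in $[0,\infty)$, so its positive $n$-th root is likewise invertible, and Proposition \ref{invert positivity A B New Deh-Mortad PRO} delivers the invertibility of $|A|+|B|$. The only real subtlety I anticipate is the interpolation between the dyadic exponents $2^k$ and a general $n$ in the forward step; this is precisely where the monotonicity supplied by Example \ref{A alpha geq A alpha smaller than one EXA} enters, mirroring the treatment of $|A|^p+|B|^q$ in the corollary above.
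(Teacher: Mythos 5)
Your argument is correct and is exactly the generalization the paper has in mind: the paper omits the proof, remarking only that the idea of Proposition \ref{mmmmmmm} carries over, and your reverse direction reproduces that scheme with the $n$th-root lemma (Lemma 1.3) in place of Lemma \ref{sqrt A+B Lemma}, after securing commutativity of $|A|$ and $|B|$ via Fuglede. The forward direction via the dyadic powers from Theorem \ref{invertibility sum abs valu BITChk THM}(2) and the monotonicity of Example \ref{A alpha geq A alpha smaller than one EXA} is likewise the same interpolation device the paper uses for the $|A|^p+|B|^q$ corollary, so no further comment is needed.
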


The following example shows that both the real and imaginary parts
of an invertible operator may be not invertible.

\begin{exa}
Consider the multiplication operators
\[Af(x)=\cos xf(x) \text{ and } Bf(x)=\sin xf(x)\]
both defined on $L^2(\R)$. Then $A$ and $B$ are not invertible.
However,
\[Tf(x)=(A+iB)f(x)=(\cos x+i\sin x)f(x)=e^{ix}f(x)\]
is invertible (even unitary!).
\end{exa}

It is fairly easy to see that a \textit{normal} $T=A+iB$ is
invertible iff $A^2+B^2$ is invertible (see e.g.
\cite{Mortad-Oper-TH-BOOK-WSPC}). With this observation, we may
state the following interesting characterization of invertibility
for the class of normal operators.

\begin{pro}\label{normal invertible sum COR}
Let $T=A+iB$ be normal in $B(H)$. Then
\[T\text{ is invertible}\Longleftrightarrow |A|+|B| \text{ is invertible.}\]
In particular, if $\lambda=\alpha+i\beta$, then
\[\lambda\in\sigma(T)\Longleftrightarrow |A-\alpha I|+|B-\beta I| \text{ is not invertible.}\]
\end{pro}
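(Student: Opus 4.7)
The plan is to chain two results already at hand: the observation (recalled just before the proposition) that a normal $T = A + iB$ is invertible if and only if $A^2 + B^2$ is invertible, and Proposition \ref{mmmmmmm}, which for commuting operators (one of which is normal) equates invertibility of $|A| + |B|$ with that of $|A|^2 + |B|^2$.

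First I would note that since $T$ is normal, the Cartesian components $A$ and $B$ commute and are self-adjoint (hence, in particular, normal). Self-adjointness gives $|A|^2 = A^*A = A^2$ and likewise $|B|^2 = B^2$, so the operators $A^2 + B^2$ and $|A|^2 + |B|^2$ coincide. Proposition \ref{mmmmmmm} therefore applies, and chaining the two equivalences yields: $T$ invertible $\Longleftrightarrow$ $A^2 + B^2$ invertible $\Longleftrightarrow$ $|A|^2 + |B|^2$ invertible $\Longleftrightarrow$ $|A| + |B|$ invertible.

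For the spectral reformulation, I would observe that for $\lambda = \alpha + i\beta$ with $\alpha, \beta \in \R$,
\[T - \lambda I = (A - \alpha I) + i(B - \beta I)\]
is again normal with Cartesian components $A - \alpha I$ and $B - \beta I$ (both self-adjoint, since $\alpha, \beta$ are real). Applying the first part of the proposition to $T - \lambda I$ then translates non-invertibility of $T - \lambda I$, i.e. $\lambda \in \sigma(T)$, into non-invertibility of $|A - \alpha I| + |B - \beta I|$.

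There is essentially no obstacle: the argument is a clean assembly of ingredients already in the paper. The one point worth highlighting is that self-adjointness of the Cartesian components is precisely what collapses $|A|^2 + |B|^2$ to $A^2 + B^2$, bridging Proposition \ref{mmmmmmm} and the known invertibility criterion for normal operators.
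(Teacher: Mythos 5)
Your proposal is correct and follows exactly the route the paper intends: the stated observation that a normal $T=A+iB$ is invertible iff $A^2+B^2$ is, combined with Proposition \ref{mmmmmmm} (applicable since the self-adjoint components $A,B$ commute and satisfy $|A|^2+|B|^2=A^2+B^2$), with the spectral reformulation obtained by applying the equivalence to the normal operator $T-\lambda I$. The paper's remark after the proposition merely records an alternative argument via the inequalities $|T|\leq|\REAL T|+|\Ima T|$ and $|\REAL T|+|\Ima T|\leq 2|T|$, so your assembly matches the primary proof.
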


\begin{rema}(cf. \cite{boucif-Dehimi-Mortad-ABS-VALUE})
Another way of establishing the previous result is as follows. By
\cite{mortad-Abs-value-BD}, we know that if $T\in B(H)$ is normal,
then
$|T|\leq |\REAL T|+|\Ima T|$.
Hence the invertibility of $T$ entails that $|\REAL T|+|\Ima T|$.
Conversely, if $T$ is normal (in fact hyponormality suffices here),
then
$|\Ima T|\leq |T|$ and  $|\REAL T|\leq |T|$
and so
\[|\Ima T|+|\REAL T|\leq 2|T|.\]
Therefore, the invertibility of $|\Ima T|+|\REAL T|$ implies that of
$T$.
\end{rema}

The following related version to Proposition \ref{spectra prod sum
commut PRO} does not make use of the Gelfand Transform.

\begin{pro}\label{sepctrum sum normal NEW PRO}
Let $S,T\in B(H)$ be normal and such that $ST=TS$. Then
\[\sigma(S+T)\subset \sigma(\REAL S+\REAL T)+i\sigma(\Ima S+\Ima T).\]
\end{pro}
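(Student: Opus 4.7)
The plan is to reduce the statement to Proposition \ref{spectra prod sum commut PRO} after establishing that $S+T$ is itself a normal operator whose real and imaginary parts are exactly $\REAL S+\REAL T$ and $\Ima S+\Ima T$, and these commute with each other.

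First I would introduce the Cartesian decompositions $S=A_1+iB_1$ and $T=A_2+iB_2$, where $A_k=\REAL\cdot$ and $B_k=\Ima\cdot$ are self-adjoint and satisfy $A_1B_1=B_1A_1$, $A_2B_2=B_2A_2$ by the normality of $S$ and $T$. The key commutativity step, which I expect to be the main obstacle, is to show that all four of $A_1,B_1,A_2,B_2$ mutually commute. From $ST=TS$ together with the Fuglede theorem (applicable since $T$ is normal), we obtain $S^*T=TS^*$, and therefore $A_1=(S+S^*)/2$ commutes with $T$. Applying Fuglede once more (now with $A_1$ self-adjoint and $T$ normal) gives $A_1T^*=T^*A_1$, so $A_1$ commutes with both $A_2$ and $B_2$. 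The same argument with $B_1=(S-S^*)/(2i)$ in place of $A_1$ yields that $B_1$ commutes with $A_2$ and $B_2$ as well.

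Once mutual commutativity is in hand, it is immediate that the self-adjoint operators $\REAL S+\REAL T=A_1+A_2$ and $\Ima S+\Ima T=B_1+B_2$ commute, so that
\[S+T=(A_1+A_2)+i(B_1+B_2)\]
is a Cartesian decomposition of a normal operator. Since $A_1+A_2$ commutes with $i(B_1+B_2)$, Proposition \ref{spectra prod sum commut PRO} applies and yields
\[\sigma(S+T)\subset \sigma(A_1+A_2)+\sigma\bigl(i(B_1+B_2)\bigr).\]

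It only remains to observe that $\sigma(iR)=i\sigma(R)$ for any $R\in B(H)$ (since $iR-\lambda I=i(R-(\lambda/i)I)$ is invertible iff $R-(\lambda/i)I$ is), applied to $R=B_1+B_2$. Substituting gives exactly
\[\sigma(S+T)\subset \sigma(\REAL S+\REAL T)+i\,\sigma(\Ima S+\Ima T),\]
as required. The non-trivial content really lies in the double application of Fuglede to propagate commutativity from $\{S,T\}$ to the four self-adjoint components; the rest is the spectral inclusion for commuting operators and a trivial scaling observation for the spectrum.
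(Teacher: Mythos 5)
Your argument is correct, but it follows a genuinely different route from the paper's. You reduce everything to the quoted subadditivity of the spectrum (Proposition \ref{spectra prod sum commut PRO}): a double application of Fuglede propagates commutativity from $\{S,T\}$ to the four Cartesian components, so $\REAL S+\REAL T$ commutes with $i(\Ima S+\Ima T)$, and the inclusion follows after the trivial observation $\sigma(iR)=i\sigma(R)$. That is a clean and short proof, with one cosmetic slip: to get $S^*T=TS^*$ from $ST=TS$ you should invoke the normality of $S$ (Fuglede applied to the normal operator $S$); normality of $T$ gives instead $ST^*=T^*S$ --- harmless here since both operators are normal. The paper, however, announces the proposition precisely as a version ``which does not make use of the Gelfand Transform'', i.e.\ it deliberately avoids Proposition \ref{spectra prod sum commut PRO} (whose standard proof in \cite{Kadison-Ringrose} rests on commutative Banach/$C^*$-algebra theory). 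Its proof instead notes that $S+T-\lambda I$ is normal with Cartesian parts $\REAL S+\REAL T-\alpha I$ and $\Ima S+\Ima T-\beta I$, and applies the paper's own characterization of invertibility of normal operators (Proposition \ref{normal invertible sum COR}): if $\lambda=\alpha+i\beta\in\sigma(S+T)$, then $|\REAL S+\REAL T-\alpha I|+|\Ima S+\Ima T-\beta I|$ is not invertible, forcing both summands, hence both self-adjoint operators, to be non-invertible, so $\alpha\in\sigma(\REAL S+\REAL T)$ and $\beta\in\sigma(\Ima S+\Ima T)$. So your proof buys brevity by leaning on the Gelfand-type subadditivity result, while the paper's buys an elementary, self-contained argument that showcases its invertibility criterion --- which is the stated point of including this proposition at all.
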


\begin{proof}
Write $S=A+iB$ and $T=C+iD$. Since $ST=TS$ and $S$ and $T$ are
normal, $S+T$ is normal (see e.g. \cite{RUD}). Hence, if we let
$\lambda=\alpha+i\beta\in\C$, then
\[S+T-\lambda I=(A+C-\alpha I)+i(B+D-\beta I)\]
becomes normal. So, if $\lambda\in\sigma(S+T)$, then Proposition
\ref{normal invertible sum COR} says that $|A+C-\alpha I|+|B+D-\beta
I|$ is not invertible. If either $|A+C-\alpha I|$ or $|B+D-\beta I|$
is invertible, then clearly $|A+C-\alpha I|+|B+D-\beta I|$ would be
invertible! Therefore, both $|A+C-\alpha I|$ and $|B+D-\beta I|$ are
not invertible, i.e. $A+C-\alpha I$ and $B+D-\beta I$ are not
invertible. In other language, $\alpha\in\sigma(A+C)$ and
$\beta\in\sigma(B+D)$. Accordingly,
$\lambda=\alpha+i\beta\in\sigma(A+C)+i\sigma(B+D)$, as wished.
\end{proof}

\begin{cor}Let $T=A+iB$ be normal in $B(H)$. Then
\[\sigma(T)\subset \sigma(A)+i\sigma(B).\]
\end{cor}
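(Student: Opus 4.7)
My plan is to derive this corollary as a direct specialization of Proposition \ref{sepctrum sum normal NEW PRO}. The idea is to apply that proposition with the two normal commuting operators taken to be $T$ itself and the zero operator on $H$. The zero operator is normal (in fact self-adjoint) and commutes with everything, so the hypotheses are satisfied. Reading off the conclusion gives
\[\sigma(T+0)\subset \sigma(\REAL T+\REAL 0)+i\sigma(\Ima T+\Ima 0)=\sigma(A)+i\sigma(B),\]
which is exactly what we want.

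If one prefers a self-contained argument that does not invoke a theorem about sums, the same proof strategy as Proposition \ref{sepctrum sum normal NEW PRO} specializes nicely. Namely, for $\lambda=\alpha+i\beta\in\sigma(T)$, observe that $T-\lambda I=(A-\alpha I)+i(B-\beta I)$ remains normal (it is a polynomial in the normal operator $T$), and it is not invertible. By Proposition \ref{normal invertible sum COR}, $|A-\alpha I|+|B-\beta I|$ is not invertible. If either summand alone were invertible, then by Proposition \ref{invert positivity A B New Deh-Mortad PRO} the sum would be invertible as well; so both $|A-\alpha I|$ and $|B-\beta I|$ fail to be invertible. Since $A-\alpha I$ and $B-\beta I$ are self-adjoint, each is invertible iff its absolute value is, so $\alpha\in\sigma(A)$ and $\beta\in\sigma(B)$, yielding $\lambda\in\sigma(A)+i\sigma(B)$.

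There is essentially no obstacle here; the corollary is almost a tautological specialization of the preceding proposition. The only tiny point that needs mention is the fact that a self-adjoint operator is invertible iff its absolute value is, which is standard and was already used in the last claim of the matrix corollary.
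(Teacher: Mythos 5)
Your proposal is correct and matches the paper's intent: the corollary is stated without proof precisely because it is the specialization of Proposition \ref{sepctrum sum normal NEW PRO} (take the second operator to be $0$, or equivalently run the proposition's proof with $C=D=0$), which is exactly your first argument. Your alternative self-contained argument is just the proof of that proposition rewritten for a single operator, so nothing essentially new is needed.
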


As another consequence, we have a new and shorter proof of a well
known result.

\begin{cor}\label{s.a. real spectrum cor}
Let $A\in B(H)$ be self-adjoint. Then $\sigma(A)\subset\R$.
\end{cor}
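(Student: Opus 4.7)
The plan is to deduce this directly from the ``in particular'' part of Proposition \ref{normal invertible sum COR}, applied with zero imaginary part. Since $A$ is self-adjoint, it is normal, and it fits the Cartesian decomposition $T=A+iB$ with $B=0$. So the characterization at our disposal reads: for any $\lambda=\alpha+i\beta\in\C$,
\[\lambda\in\sigma(A)\Longleftrightarrow |A-\alpha I|+|\beta| I \text{ is not invertible,}\]
where I have used $|0-\beta I|=|\beta|I$.

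Given $\lambda\in\sigma(A)$, I would argue by contradiction, assuming $\beta\neq 0$. Then $|\beta|I$ is strictly positive and (trivially) invertible, so adding the positive operator $|A-\alpha I|\geq 0$ gives
\[|A-\alpha I|+|\beta| I\geq |\beta| I>0.\]
Proposition \ref{invert positivity A B New Deh-Mortad PRO} then forces $|A-\alpha I|+|\beta| I$ to be invertible, contradicting the previous display. Hence $\beta=0$ and $\lambda=\alpha\in\R$.

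There is essentially no obstacle here: the real work was already done in Proposition \ref{normal invertible sum COR}, and the only ingredient beyond that is the monotonicity of inversion for positive operators. The whole point is that the self-adjoint case reduces to the trivial observation that $|\beta|I$ is invertible as soon as $\beta\neq 0$, which is why the resulting proof is genuinely short.
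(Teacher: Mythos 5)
Your proposal is correct and follows essentially the same route as the paper: both rest on Proposition \ref{normal invertible sum COR} together with Proposition \ref{invert positivity A B New Deh-Mortad PRO}, the only cosmetic difference being that you invoke the ``in particular'' reformulation with $B=0$ and argue by contradiction, while the paper applies the characterization contrapositively to the normal operator $A-\alpha I-i\beta I$. No gap to report.
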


\begin{proof}
Let $\lambda\not\in\R$, i.e. $\lambda=\alpha+i\beta$ ($\alpha\in\R$,
$\beta\in\R^*$). Since $A-\alpha I$ is self-adjoint, it follows that
$A-\alpha I-i\beta I$ is normal. By the invertibility of $|\beta|I$,
it follows that of $|A-\alpha I|+|\beta I|$ (by Proposition
\ref{invert positivity A B New Deh-Mortad PRO}). By Proposition
\ref{normal invertible sum COR}, this means that $A-\lambda I$ is
invertible, that is, $\lambda\not\in \sigma(A)$.
\end{proof}

The following result appeared in \cite{mortad-Abs-value-BD}.

\begin{pro}
Let $A,B\in B(H)$ be such that $AB=BA$. If $A$ is normal and $B$ is
hyponormal, then the following inequality holds:
\[||A|-|B||\leq |A-B|.\]
\end{pro}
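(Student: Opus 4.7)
The overall strategy is to square both sides and invoke the monotonicity of the square root (Theorem \ref{theorem squrae root first}): since $|A|-|B|$ is self-adjoint, $||A|-|B||^{2}=(|A|-|B|)^{2}$, so it suffices to prove $(|A|-|B|)^{2}\leq (A-B)^{*}(A-B)$. Direct expansion reduces this to showing
$$2|A||B|-A^{*}B-B^{*}A\geq 0,$$
once one knows that $|A|$ and $|B|$ commute. For that I would invoke Fuglede's theorem: since $A$ is normal and $AB=BA$, also $A^{*}B=BA^{*}$, and applying the continuous functional calculus to $|A|^{2}=A^{*}A$ shows that $|A|$ commutes with $B$ and $B^{*}$, hence with $B^{*}B=|B|^{2}$ and with $|B|$.

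Next I would factor out $|A|$ via the polar decomposition $A=U|A|$. Normality of $A$ gives $U|A|=|A|U$, and a short verification (splitting between $\overline{\ran|A|}$ and $\ker|A|=\ker A$, the latter being $B$-invariant) shows that the normal partial isometry $U$ also commutes with $B$, $B^{*}$, $|B|$, and $|B^{*}|$. Substituting $A^{*}=|A|U^{*}$ and pulling $|A|$ out yields
$$2|A||B|-A^{*}B-B^{*}A \;=\; |A|\bigl(2|B|-U^{*}B-B^{*}U\bigr) \;=:\; |A|\,D.$$
Both $|A|$ and $D$ are self-adjoint and commute, so positivity of the product reduces to proving $D\geq 0$.

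To prove $D\geq 0$, I would fix $x\in H$ and use the polar decomposition $B=V|B|$ to rewrite
$$\langle Bx,Ux\rangle \;=\; \bigl\langle |B|^{\frac{1}{2}}x,\ |B|^{\frac{1}{2}}V^{*}Ux\bigr\rangle,$$
apply the Cauchy--Schwarz inequality, and use the standard identity $V|B|V^{*}=|B^{*}|$ to obtain $|\langle Bx,Ux\rangle|^{2}\leq \langle |B|x,x\rangle\,\langle|B^{*}|Ux,Ux\rangle$. Since $U$ commutes with $|B^{*}|$ and $U^{*}U\leq I$, the second factor is bounded by $\langle|B^{*}|x,x\rangle$, and the hyponormality $|B^{*}|\leq|B|$ brings this further down to $\langle|B|x,x\rangle$. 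Taking real parts then gives $\langle Dx,x\rangle=2\langle|B|x,x\rangle-2\REAL\langle Bx,Ux\rangle\geq 0$.

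The main obstacle will be the bookkeeping of all these commutativities (Fuglede applied first to $A$ and then to $U$, together with the functional-calculus steps at each stage) and, more substantively, isolating the one place that genuinely needs $B$ hyponormal rather than merely $A$ normal: this is the final Cauchy--Schwarz/hyponormality chain, which is the heart of the argument.
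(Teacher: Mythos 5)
Your argument is correct, but note that the paper itself offers no proof to compare against: it states this proposition as a known result imported from \cite{mortad-Abs-value-BD}, so your write-up is a genuinely self-contained alternative. Checking your outline: squaring is legitimate since $||A|-|B||^2=(|A|-|B|)^2$ and Theorem \ref{theorem squrae root first} transfers $(|A|-|B|)^2\leq|A-B|^2$ to the claimed inequality; Fuglede plus functional calculus does give that $|A|$ commutes with $B$, $B^*$, $|B|$; and your splitting of $H=\overline{\ran|A|}\oplus\ker A$ (both subspaces reduce $B$ and $B^*$ because these commute with $A$ and $A^*$) does yield $UB=BU$, $UB^*=B^*U$, hence $U$ commutes with $|B|$ and $|B^*|$ as well --- a slicker way to get all of this at once is to observe that $U$ and $|A|$ lie in the von Neumann algebra generated by $A$, while $B,B^*\in\{A,A^*\}'$, which would spare you most of the bookkeeping you worry about. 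The factorization $2|A||B|-A^*B-B^*A=|A|\,(2|B|-U^*B-B^*U)$ and the reduction to $D\geq0$ via commuting positive factors are sound, and the final chain is exactly where hyponormality enters: $|\langle Bx,Ux\rangle|^2\leq\langle|B|x,x\rangle\,\langle|B^*|Ux,Ux\rangle$ by Cauchy--Schwarz and $V|B|V^*=|B^*|$, then $\langle|B^*|Ux,Ux\rangle=\|U|B^*|^{1/2}x\|^2\leq\langle|B^*|x,x\rangle\leq\langle|B|x,x\rangle$, the last step using $|B^*|\leq|B|$, which again needs Theorem \ref{theorem squrae root first} applied to $BB^*\leq B^*B$. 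So the proposal stands as a complete proof; whether it coincides with the one in \cite{mortad-Abs-value-BD} cannot be judged from this paper, which omits it.
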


As a consequence of the previous result, we get a very short proof
concerning the spectrum of unitary operators.

\begin{cor}\label{unitary unite cercle spectrum cor}
Let $A\in B(H)$ be unitary. Then $\sigma(A)\subset
\{\lambda\in\C:~|\lambda|=1\}$.
\end{cor}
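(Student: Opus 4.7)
The plan is to take $\lambda\in\C$ with $|\lambda|\neq 1$ and show $A-\lambda I$ is invertible by applying the previous proposition to the pair $(A,\lambda I)$. Observe that $A$ and $\lambda I$ commute trivially, $A$ is normal (in fact unitary), and $\lambda I$ is normal hence hyponormal, so the hypotheses are met.

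The point of the computation is that the absolute values collapse to scalars: since $A$ is unitary, $|A|=\sqrt{A^*A}=I$, and clearly $|\lambda I|=|\lambda|I$. The inequality from the proposition therefore reads
\[
\bigl| (1-|\lambda|)I\bigr|\leq |A-\lambda I|,
\]
i.e.\ $|1-|\lambda||\, I\leq |A-\lambda I|$.

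Now the hypothesis $|\lambda|\neq 1$ guarantees that the scalar $|1-|\lambda||$ is strictly positive, so the left-hand side is a strictly positive multiple of $I$ and in particular boundedly invertible. Proposition \ref{invert positivity A B New Deh-Mortad PRO} then forces $|A-\lambda I|$ to be invertible as well.

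Finally, $A-\lambda I$ is normal (as $A$ is normal), and a normal operator is invertible if and only if its absolute value is, a fact already invoked in the paper. Hence $A-\lambda I$ is invertible, which shows $\lambda\notin\sigma(A)$, and the contrapositive gives $\sigma(A)\subset\{\lambda\in\C:|\lambda|=1\}$. There is no real obstacle here beyond correctly identifying $|A|=I$ and recognizing that the previous proposition degenerates, in this commuting scalar case, into exactly the numerical reverse triangle inequality lifted to $|A-\lambda I|$.
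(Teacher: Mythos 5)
Your proof is correct and follows essentially the same route as the paper: apply the preceding proposition to $A$ and $\lambda I$ to get $|1-|\lambda||\,I\leq |A-\lambda I|$, then conclude invertibility of $A-\lambda I$ for $|\lambda|\neq 1$. You merely spell out explicitly (via Proposition \ref{invert positivity A B New Deh-Mortad PRO} and the normality of $A-\lambda I$) the steps the paper leaves implicit.
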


\begin{proof}
We have $|A|=I$ and so
$|1-|\lambda||I=|I-|\lambda|I|\leq |A-\lambda I|$.
Thus, if $|\lambda|\neq1$, then $\lambda\not\in\sigma(A)$.
\end{proof}

It is known that a normal operator having a real spectrum is
self-adjoint. The proof is very simple if we know the very
complicated Spectral Theorem for normal operators. It would be
interesting to prove this result along the lines of the proof of
Corollary \ref{s.a. real spectrum cor}. Notice also that this can
very easily be established if the imaginary part of $T$ is a scalar
operator. A new proof in the general case has not been obtained yet.
Nonetheless, as an application of Proposition \ref{spectra prod sum
commut PRO}, we have the following short proof (which seems to have
escaped notice) of this result.

\begin{pro}\label{normal real spectrum s.a. PROPO}
Let $T=A+iB$ be normal in $B(H)$ and such that $\sigma(T)\subset\R$.
Then $T$ is self-adjoint.
\end{pro}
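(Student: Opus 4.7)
The plan is to extract the imaginary part $B$ from $T$ by considering $T-T^{*}=2iB$, apply the subadditivity of the spectrum (Proposition \ref{spectra prod sum commut PRO}) to $T$ and $-T^{*}$, and conclude that $\sigma(B)=\{0\}$, forcing $B=0$ by self-adjointness.

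First I would observe that the normality of $T$ gives $TT^{*}=T^{*}T$, so in particular $T$ commutes with $-T^{*}$. Thus Proposition \ref{spectra prod sum commut PRO} applies and yields
\[
\sigma(T-T^{*})\subset \sigma(T)+\sigma(-T^{*})=\sigma(T)-\sigma(T^{*}).
\]
The standard fact that $\sigma(T^{*})=\overline{\sigma(T)}$, combined with the hypothesis $\sigma(T)\subset\R$, gives $\sigma(T^{*})\subset\R$, whence $\sigma(T-T^{*})\subset\R$.

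On the other hand, $T-T^{*}=2iB$ with $B$ self-adjoint, and by Corollary \ref{s.a. real spectrum cor} we have $\sigma(B)\subset\R$, hence $\sigma(2iB)=2i\,\sigma(B)\subset i\R$. Combining both inclusions,
\[
\sigma(2iB)\subset \R\cap i\R=\{0\},
\]
so $\sigma(B)=\{0\}$. Since $B$ is self-adjoint, the norm equals the spectral radius, giving $\|B\|=r(B)=0$, i.e.\ $B=0$. Therefore $T=A$ is self-adjoint.

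There is no serious obstacle in this argument; the only point requiring care is noting that normality makes $T$ and $-T^{*}$ commute so that Proposition \ref{spectra prod sum commut PRO} is applicable, and that the two established inclusions on $\sigma(2iB)$ are compatible only at the origin. Everything else reduces to standard identities ($\sigma(T^{*})=\overline{\sigma(T)}$, $\sigma(\alpha S)=\alpha\sigma(S)$, and the norm-spectral-radius equality for self-adjoint operators).
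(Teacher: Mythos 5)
Your argument is correct and follows essentially the same route as the paper: both apply the subadditivity of the spectrum (Proposition \ref{spectra prod sum commut PRO}) to a commuting pair whose sum is a purely imaginary multiple of $B$, deduce $\sigma(B)=\{0\}$, and conclude $B=0$ via the spectral radius. The only cosmetic difference is that you subtract $T^{*}$ (using $\sigma(T^{*})=\overline{\sigma(T)}$ and $T T^{*}=T^{*}T$) to reach $2iB$, whereas the paper subtracts the real part $A$ (using $TA=AT$, which follows from $AB=BA$) to reach $iB$.
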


\begin{proof}Recall that $A$ and $B$ are self-adjoint.
The normality of $T$ is equivalent to $AB=BA$. Hence $TA=AT$. Since
$T-A=iB$, we have by Proposition \ref{spectra prod sum commut PRO}
\[i\sigma(B)=\sigma(iB)=\sigma(T-A)\subset\sigma(T)+\sigma(-A)\subset \R.\]
Thus, necessarily $\sigma(B)=\{0\}$. Accordingly, the Spectral
Radius Theorem gives us $B=0$  and so $T=A$, i.e. $T$ is
self-adjoint.
\end{proof}

What is also interesting is that since Proposition \ref{spectra prod
sum commut PRO} holds in the context of Banach algebras (see Theorem
11.23 in \cite{RUD}), Proposition \ref{normal real spectrum s.a.
PROPO} becomes valid in the context of $C^*$-algebras. The proof is
identical and so it is omitted.

\begin{pro}Let $\mathcal{A}$ be a $C^*$-algebra and let $a\in
\mathcal{A}$ be a normal element. If $\sigma(a)\subset\R$, then $a$
is hermitian.
\end{pro}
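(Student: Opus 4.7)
The plan is to mirror the proof of Proposition \ref{normal real spectrum s.a. PROPO} line by line, but now in the $C^*$-algebraic setting where the Cartesian decomposition, commuting spectra subadditivity (Theorem 11.23 in \cite{RUD}), and the spectral radius formula for hermitian elements all remain available. First I would write $a=h+ik$ with $h=(a+a^*)/2$ and $k=(a-a^*)/(2i)$ hermitian. The normality of $a$ amounts to $hk=kh$, and consequently $h$ commutes with $a=h+ik$. The target reduces to showing $k=0$.

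Next I would invoke subadditivity of the spectrum for the commuting pair $a,-h$ in the Banach algebra $\mathcal{A}$:
\[
i\,\sigma(k)=\sigma(ik)=\sigma(a-h)\subset \sigma(a)+\sigma(-h)\subset \R,
\]
where I use $\sigma(a)\subset\R$ by hypothesis and $\sigma(h)\subset\R$ because hermitian elements in a $C^*$-algebra have real spectrum. Combined with $\sigma(k)\subset\R$ (again because $k$ is hermitian), the inclusion $i\sigma(k)\subset\R$ forces $\sigma(k)=\{0\}$.

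To close, I would apply the spectral radius formula for hermitian elements, namely $\|k\|=r(k)$, which in a $C^*$-algebra follows from the $C^*$-identity $\|k^2\|=\|k^*k\|=\|k\|^2$ together with $r(k)=\lim\|k^n\|^{1/n}$. Therefore $\|k\|=r(k)=0$, so $k=0$ and $a=h$ is hermitian.

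The main obstacle, as with the Hilbert-space version, is purely conceptual: one must verify that the two ingredients appearing in the bounded-operator proof (real spectrum of hermitian elements and spectral radius equal to norm for hermitian elements) carry over to an abstract $C^*$-algebra without invoking a spectral theorem. Both are standard and follow from the $C^*$-identity applied to the commutative $C^*$-subalgebra generated by $k$ and the unit (adjoining one if $\mathcal{A}$ is non-unital), so the transposition of the proof is genuinely mechanical.
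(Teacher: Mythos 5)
Your proof is correct and is exactly the argument the paper intends: the paper omits the proof precisely because it is identical to that of Proposition \ref{normal real spectrum s.a. PROPO}, which you reproduce line by line (Cartesian decomposition, Rudin's Theorem 11.23 for commuting elements, then $\sigma(k)=\{0\}$ and $\|k\|=r(k)=0$). Your added remarks that real spectrum and the spectral-radius identity for hermitian elements carry over via the $C^*$-identity (with unitization if needed) are accurate and fill in the routine details the paper leaves implicit.
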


We also have the following related result.

\begin{pro}
Let $\mathcal{A}$ be a $C^*$-algebra and let $a\in \mathcal{A}$ be a
normal element. If $\sigma(a)$ is purely imaginary, then $a$ is
skew-hermitian (i.e. $a^*=-a$).
\end{pro}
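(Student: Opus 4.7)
The plan is to reduce this proposition directly to the previous one by a simple rotation in the complex plane. Multiplication by $i$ sends the imaginary axis to the real axis, so if $\sigma(a) \subset i\R$, then $\sigma(ia) \subset \R$; and since $ia$ inherits normality from $a$, the preceding proposition will make $ia$ hermitian.

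First I would set $b := ia$. Since $a$ is normal, $b^*b = (-i a^*)(ia) = a^*a = aa^* = (ia)(-ia^*) = bb^*$, so $b$ is normal. Next, using the spectral mapping for the polynomial $z \mapsto iz$ (or just the identity $\sigma(\lambda x) = \lambda\sigma(x)$ in a unital Banach algebra), we get $\sigma(b) = i\,\sigma(a) \subset i \cdot i\R = \R$.

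Now I invoke the preceding proposition applied to $b$: since $b$ is normal in $\mathcal{A}$ with real spectrum, $b$ is hermitian, i.e.\ $b^* = b$. Unpacking, $(ia)^* = ia$, so $-ia^* = ia$, whence $a^* = -a$, which is the claim.

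The only subtlety is the non-unital case: if $\mathcal{A}$ has no unit, one should interpret $\sigma(a)$ as the spectrum in the unitization $\widetilde{\mathcal{A}}$, and the above argument goes through verbatim there since $ia$ is still normal and the preceding proposition is itself stated for $C^*$-algebras. There is no real obstacle here; the content of the proof lies entirely in the previous proposition, and this statement is its skew-hermitian analogue obtained by a trivial change of variable.
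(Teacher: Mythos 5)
Your proof is correct, but it takes a different (and more economical) route than the paper. You reduce the statement to the preceding proposition by the change of variable $b=ia$: normality of $b$ is immediate, $\sigma(b)=i\sigma(a)\subset\R$ by the scalar spectral mapping identity, and then $b^*=b$ unpacks to $a^*=-a$. The paper instead re-runs the mechanism of its previous proof: it writes the Cartesian decomposition $a=x+iy$ with $x,y$ commuting hermitian elements, observes $x=a-iy$, and uses the subadditivity of the spectrum for commuting elements to get $\sigma(x)\subset\sigma(a)+\sigma(-iy)\subset i\R$, which together with $\sigma(x)\subset\R$ forces $\sigma(x)=\{0\}$ and hence $x=0$ by the spectral radius theorem. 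Your reduction treats the earlier proposition as a black box and is shorter and less error-prone; the paper's version is self-contained in the sense that it exhibits once more how the subadditivity argument works, at the cost of repeating it. Your remark about the non-unital case (interpreting the spectrum in the unitization) is a reasonable precaution and does not affect the argument.
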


\begin{proof}Write $a=x+iy$ where $x$ and $y$ are commuting hermitian elements
of $\mathcal{A}$. Write $x=a-iy$ and proceed as above to force
$x=0$.
\end{proof}

The last result about the spectrum is the following.

\begin{pro}\label{spectra prod sum equivalence PRO}
The "subadditivity of the spectrum" is equivalent to the
"submultiplicativity of the spectrum" in the context of bounded
positive, commuting and invertible operators.
\end{pro}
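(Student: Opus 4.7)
The plan is to prove the two implications separately, each exploiting the continuous functional calculus via the exponential and logarithm, together with the fact that $A$ and $B$ commute. Fix positive, invertible, commuting $A, B\in B(H)$ throughout.

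For the direction ``submultiplicativity $\Longrightarrow$ subadditivity'', I would pass to the operators $e^A$ and $e^B$ obtained by functional calculus. These are again positive, invertible (since $e^{\sigma(A)}\subset(0,\infty)$), and commuting (the commutativity of $A$ and $B$ is inherited by any pair $f(A), g(B)$). Applying the submultiplicativity hypothesis to $e^A, e^B$ yields $\sigma(e^A e^B)\subset\sigma(e^A)\sigma(e^B)$. Since $A$ and $B$ commute, $e^A e^B=e^{A+B}$, and the spectral mapping theorem then rewrites the inclusion as $e^{\sigma(A+B)}\subset e^{\sigma(A)+\sigma(B)}$. All spectra here are real and $\exp\colon\R\to(0,\infty)$ is injective, so cancelling the exponential delivers $\sigma(A+B)\subset\sigma(A)+\sigma(B)$.

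The reverse direction is where I expect the small obstacle. The natural idea is to apply subadditivity to $\log A, \log B$, but these are only self-adjoint, not necessarily positive invertible, so the hypothesis does not directly apply. I would handle this by a scaling trick: since $\sigma(A)$ is a compact subset of $(0,\infty)$, pick a scalar $c>0$ with $\sigma(cA)\subset(1,\infty)$, so that $\log(cA)$ has spectrum in $(0,\infty)$ and is therefore positive and invertible; choose $d>0$ similarly for $B$. Then $\log(cA)$ and $\log(dB)$ lie in the permitted class, and subadditivity applies to give
\[\sigma\bigl(\log(cA)+\log(dB)\bigr)\subset\sigma(\log(cA))+\sigma(\log(dB)).\]
Because $cA$ and $dB$ commute, $\log(cA)+\log(dB)=\log(cd\,AB)$. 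Translating everything through the spectral mapping theorem turns the above into $\log\bigl(cd\,\sigma(AB)\bigr)\subset\log\bigl(cd\,\sigma(A)\sigma(B)\bigr)$, and the injectivity of $\log$ on $(0,\infty)$ together with cancellation of the factor $cd$ yields $\sigma(AB)\subset\sigma(A)\sigma(B)$.

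The main obstacle is exactly this scaling step — without it, $\log A$ and $\log B$ might fall outside the hypothesis class. Once the scaling is in place, the rest reduces to routine applications of the spectral mapping theorem and to the identities $e^{A+B}=e^A e^B$ and $\log(cA)+\log(dB)=\log(cd\,AB)$, both of which rest on the standing commutativity of $A$ and $B$.
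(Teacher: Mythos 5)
Your proof is correct, and its core mechanism is the same as the paper's: transport the problem through the exponential and logarithm via the spectral mapping theorem, using commutativity to write $e^{A+B}=e^Ae^B$ and $\log(AB)=\log A+\log B$. The one place you genuinely diverge is the reverse direction. The paper simply applies subadditivity to $\ln A$ and $\ln B$, which are self-adjoint but need not be positive or invertible; this is harmless there because the subadditivity statement it invokes (Proposition \ref{spectra prod sum commut PRO}) holds for arbitrary commuting operators, but it does mean the paper is not literally staying inside the class of ``positive, commuting and invertible'' operators announced in the statement. Your scaling trick --- replacing $A,B$ by $cA,dB$ so that $\log(cA)$ and $\log(dB)$ have spectra in $(0,\infty)$, then cancelling the factor $cd$ at the end --- keeps the hypothesis applied only to operators in the permitted class, so your argument proves the equivalence under the strictly restricted reading of the proposition, at the cost of a small extra bookkeeping step that the paper avoids by quoting the more general form of subadditivity. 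Both arguments are sound; yours is the more self-contained relative to the proposition as stated.
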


\begin{proof}
Let $A,B\in B(H)$ be self-adjoint and such that $AB=BA$. Assume that
Proposition \ref{spectra PROD commut PRO} holds. Let $\lambda\in
\sigma(A+B)$. Then the Spectral Mapping Theorem yields
\[e^{\lambda}\in \sigma(e^{A+B})=\sigma(e^{A}e^{B})\subset
\sigma(e^{A})\sigma(e^{B}),\] that is,
$e^{\lambda}=e^{\alpha}e^{\beta}$ for $\alpha\in\sigma(A)$ and
$\beta\in\sigma(B)$. Hence
\[\lambda=\ln(e^{\alpha}e^{\beta})=\alpha+\beta\in \sigma(A)+\sigma(B).\]

Now, suppose that Proposition \ref{spectra prod sum commut PRO}
holds. Suppose also here that $A$ and $B$ are positive and
invertible. Let $\lambda \in \sigma(AB)$. Since $AB$ is positive and
invertible, by the Spectral Mapping Theorem we get
\[\ln \lambda\in \sigma[\ln (AB)]=\sigma(\ln A+\ln B)\subset \sigma(\ln A)+\sigma(\ln B)=\ln[\sigma (A)]+\ln [\sigma (B)],\]
i.e. $\ln \lambda=\ln {\alpha}+\ln{\beta}$ with $\alpha\in\sigma(A)$
and $\beta\in\sigma(B)$. Thus, $\lambda=\alpha\beta \in
\sigma(A)\sigma(B)$, as required.
\end{proof}

Let's go back to invertibility.

\begin{pro}\label{jjj}
Let $A,B\in B(H)$ be such that $AB\geq 0$ and $AB$ is invertible.
Then $|A^*|^2+|B|^2$ is invertible.
\end{pro}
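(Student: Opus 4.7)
The plan is to produce a positivity inequality of the form $|A^*|^2+|B|^2 \geq 2AB$ and then invoke Proposition \ref{invert positivity A B New Deh-Mortad PRO}, using the hypothesis that $AB$ is positive and invertible.

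The first step is to exploit the self-adjointness of $AB$. Since $AB\geq 0$, one has $(AB)^*=B^*A^*=AB$. This identification of the ``cross terms'' is the key algebraic observation; without it, the expansion below would only give $-AB-B^*A^*$, which is not a priori related to the positive operator $2AB$.

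Next, setting $C=A^*-B$, I would compute
\[
C^*C=(A-B^*)(A^*-B)=AA^*-AB-B^*A^*+B^*B=|A^*|^2+|B|^2-2AB,
\]
where the last equality uses $B^*A^*=AB$. Since $C^*C\geq 0$, we obtain the operator inequality
\[
|A^*|^2+|B|^2\geq 2AB.
\]

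Finally, $2AB$ is positive and invertible by hypothesis, so Proposition \ref{invert positivity A B New Deh-Mortad PRO} immediately yields that $|A^*|^2+|B|^2$ is invertible, as required. The only genuinely non-routine step is the opening observation that turns $B^*A^*$ into $AB$; once that is in hand, the proof is a direct application of the Kadison--Reid type inequality already recorded earlier in the paper.
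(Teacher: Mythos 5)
Your proof is correct, but it takes a different route from the paper. The paper argues at the level of quadratic forms: for each $x$ it writes $\langle ABx,x\rangle=|\langle Bx,A^*x\rangle|$, applies Cauchy--Schwarz and the scalar inequality $ab\leq 2(a^2+b^2)$, and concludes $AB\leq 2(|A^*|^2+|B|^2)$ before invoking Proposition \ref{invert positivity A B New Deh-Mortad PRO}. You instead use the positivity of $AB$ (on a complex Hilbert space) to get self-adjointness, i.e.\ $B^*A^*=AB$, and then read off the operator identity
\[
(A-B^*)(A^*-B)=|A^*|^2+|B|^2-2AB\geq 0,
\]
which yields the sharper inequality $|A^*|^2+|B|^2\geq 2AB$; the final step via Proposition \ref{invert positivity A B New Deh-Mortad PRO} is the same. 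Your algebraic argument is cleaner and gives a better constant (the paper's estimate is weaker by a factor of four), and it would also adapt directly to the finite-sum version in Theorem \ref{inver sum A_k B_k THM}, since $\sum_k(A_k-B_k^*)(A_k^*-B_k)\geq0$ gives the analogous bound. The paper's Cauchy--Schwarz route, on the other hand, is the one that transfers painlessly to the unbounded setting of Theorem \ref{UNBD MAIN THM}, where expanding a product like $(A-B^*)(A^*-B)$ would raise domain issues that the pointwise quadratic-form estimate avoids. One small point worth making explicit in your write-up: the implication $AB\geq0\Rightarrow(AB)^*=AB$ uses that $H$ is a \emph{complex} Hilbert space, which is indeed the paper's standing assumption.
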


\begin{proof}Let $x\in H$. Then
\begin{align*}
&\langle ABx,x\rangle=|\langle ABx,x\rangle|=|\langle Bx,A^*x\rangle|\\
&\leq \|A^*x\|\|Bx\|\leq 2(\|A^*x\|^2+\|Bx\|^2)\\
&=2(\langle AA^*x,x\rangle+\langle
B^*Bx,x\rangle)=\langle2(AA^*+B^*B)x,x\rangle,
\end{align*}
that is,
$AB\leq 2(AA^*+B^*B)=2(|A^*|^2+|B|^2)$.
Proposition \ref{invert positivity A B New Deh-Mortad PRO} allows us
to establish the invertibility of $|A^*|^2+|B|^2$, as required.
\end{proof}

\begin{rema}
Notice that the result is obvious if either $A$ or $B$ is
invertible. In order to keep the result non-trivial we need also to
avoid $\ker A=\ker A^*$ \textit{and} $\ker B=\ker B^*$ (see
\cite{Dehimi-Mortad-2018}). This remark applies to the unbounded
case as well (treated in Theorem \ref{UNBD MAIN THM} below).
\end{rema}

\begin{rema}
The fact that we have assumed the invertibility of $AB$ is essential
as seen by the following example.
\end{rema}

\begin{exa}
Let $A$ be the positive operator defined by
\[Af(x)=xf(x),~f\in L^2(0,1).\]
Setting $B=A$, we see that
$ABf(x)=x^2f(x)$
is positive. However,
\[(|A^*|^2+|B|^2)f(x)=(A^2+B^2)f(x)=2x^2f(x)\]
is not invertible.
\end{exa}

As a consequence of the foregoing proposition, we have the
following.

\begin{cor}
Let $A\in B(H)$ be right (resp. left) invertible with $B\in B(H)$
being its right (resp. left) inverse. Then $|A^*|^2+|B|^2$ (resp.
$|A|^2+|B^*|^2$) is always invertible.
\end{cor}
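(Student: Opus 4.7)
The plan is to observe that this corollary is an immediate specialization of Proposition \ref{jjj}. The hypotheses in that proposition are (i) $AB \geq 0$ and (ii) $AB$ invertible; when $B$ is a right inverse of $A$ we have $AB = I$, which trivially satisfies both (indeed $I \geq 0$ and $I$ is invertible). Hence the conclusion $|A^*|^2 + |B|^2$ invertible follows at once.

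For the left-inverse case, suppose instead that $BA = I$, i.e.\ $B$ is a left inverse of $A$. I would apply Proposition \ref{jjj} with the roles of $A$ and $B$ interchanged: the product $BA$ then plays the role of ``$AB$'' in the proposition, and again $BA = I$ is positive and invertible. The conclusion, after the relabeling, reads that $|B^*|^2 + |A|^2 = BB^* + A^*A$ is invertible, which is exactly $|A|^2 + |B^*|^2$.

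The ``main obstacle'' here is essentially nonexistent — the entire argument amounts to noticing that the identity operator satisfies both hypotheses of the preceding proposition. The only thing worth checking carefully is that the order of the factors matches: Proposition \ref{jjj} produces $|A^*|^2 + |B|^2 = AA^* + B^*B$ from $AB$, and symmetry under the swap $A \leftrightarrow B$ produces $BB^* + A^*A$ from $BA$, which is the asymmetric conclusion advertised in the statement. I would present the proof in one or two sentences, simply citing Proposition \ref{jjj} twice (once as stated, once with $A$ and $B$ interchanged).
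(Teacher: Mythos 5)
Your argument matches the paper's own proof: in both cases one notes that $AB=I$ (resp.\ $BA=I$) is positive and invertible and applies Proposition \ref{jjj}, with the left-inverse case handled by the same role swap you describe. Correct and essentially identical to the paper.
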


\begin{proof}
Since $A$ is right invertible, for some $B\in B(H)$, we have $AB=I$.
Since the latter is positive and invertible, the result follows
immediately. The case of left-invertibility is identical.
\end{proof}

We can generalize Proposition \ref{jjj} to unbounded operators.

\begin{thm}\label{UNBD MAIN THM}
Let $A$ and $B$ be two operators such that $A$ is closed and $B\in
B(H)$.
\begin{enumerate}
  \item If $BA$ is positive (i.e. $\langle BAx,x\rangle\geq0$ for all $x\in D(BA)$) and invertible, then $|A|^2+|B^*|^2$ is
  invertible. Besides,
  \[\langle(|A|^2+|B^*|^2)^{-1}x,x\rangle\leq \langle(BA)^{-1}x,x\rangle~\forall x\in H.\]
  \item If $AB$ is densely defined, positive and invertible, then $|A^*|^2+|B|^2$ is
  invertible. Moreover,
  \[\langle(|A^*|^2+|B|^2)^{-1}x,x\rangle\leq \langle(AB)^{-1}x,x\rangle~\forall x\in H.\]
\end{enumerate}
\end{thm}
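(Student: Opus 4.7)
The plan is to mimic the bounded proof of Proposition \ref{jjj} at the level of quadratic forms, and then invoke the unbounded version of Proposition \ref{invert positivity A B New Deh-Mortad PRO} recorded just after Definition \ref{defn pos unbd schmud}.

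First I would observe that, in both parts, $BA$ (resp.\ $AB$) is actually self-adjoint: positivity in the quadratic-form sense forces symmetry by polarization, and a densely-defined symmetric operator that admits a bounded everywhere-defined inverse must be self-adjoint (via self-adjointness of its bounded inverse). Since $A$ is closed, von Neumann's theorem makes $|A|^2=A^*A$ and $|A^*|^2=AA^*$ positive self-adjoint, so adding the bounded positive $BB^*$, respectively $B^*B$, yields self-adjoint sums whose form domains are $D(A)$ and $D(A^*)$, respectively.

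For part~(1), since $B$ is everywhere defined, $D(BA)=D(A)$, and for each $x\in D(A)$ the positivity of $BA$ gives
\[\langle BAx,x\rangle \;=\; \bigl|\langle Ax,B^*x\rangle\bigr| \;\leq\; \|Ax\|\,\|B^*x\| \;\leq\; \tfrac{1}{2}\bigl(\|Ax\|^2+\|B^*x\|^2\bigr).\]
In form language this reads $2\,BA\leq |A|^2+|B^*|^2$ on the common form domain $D(A)$; since $BA\geq 0$ I also get $BA\leq |A|^2+|B^*|^2$, and the unbounded Proposition \ref{invert positivity A B New Deh-Mortad PRO} then delivers both the invertibility of $|A|^2+|B^*|^2$ and the bound $\langle(|A|^2+|B^*|^2)^{-1}x,x\rangle\leq\langle(BA)^{-1}x,x\rangle$.

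For part~(2) the same computation would require moving $A$ across the inner product, which needs $x\in D(A^*)$; this is \emph{not} automatic for $x\in D(AB)$ and constitutes the main obstacle. I would overcome it using the self-adjointness of $AB$: because $B^*A^*\subseteq(AB)^*=AB$, one has $D(A^*)\subseteq D(AB)$ with $ABx=B^*A^*x$ on $D(A^*)$, and $D(A^*)$ is precisely the form domain of $|A^*|^2+|B|^2$. For $x\in D(A^*)$ the estimate
\[\langle ABx,x\rangle \;=\; \langle A^*x,Bx\rangle \;\leq\; \tfrac{1}{2}\bigl(\|A^*x\|^2+\|Bx\|^2\bigr)\]
then yields $2\,AB\leq |A^*|^2+|B|^2$ in the form sense on $D(A^*)$, and the conclusion follows exactly as in part~(1). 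Once this domain bookkeeping is in place, the entire argument is a routine form-theoretic translation of Proposition \ref{jjj}.
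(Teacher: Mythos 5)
Your proposal is correct and follows essentially the same route as the paper: prove the quadratic-form inequality bounding $BA$ (resp.\ $AB$) by $|A|^2+|B^*|^2$ (resp.\ $|A^*|^2+|B|^2$), obtain self-adjointness of the product from positivity plus bounded invertibility, handle the domain issue in part (2) via $B^*A^*\subseteq (AB)^*=AB$, and conclude with the unbounded version of Proposition \ref{invert positivity A B New Deh-Mortad PRO} (the paper additionally cites Schm\"udgen's Lemma 10.10 to pass from the weak order to the order of Definition \ref{defn pos unbd schmud}). Your sharper constant $\tfrac{1}{2}$ in the arithmetic--geometric mean step actually delivers the stated inequality $\langle(|A|^2+|B^*|^2)^{-1}x,x\rangle\leq \langle(BA)^{-1}x,x\rangle$ directly, whereas the paper's factor $2$ estimate yields it only up to a constant.
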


\begin{proof} (1): The first step is to show that
  $BA\leq 2|A|^2+2|B^*|^2$.
  Observe that
  \[D(|A|^2+|B^*|^2)=D(A^*A+BB^*)=D(A^*A)\subset D(A)=D(BA).\]
  Now, let $x\in D(A^*A)$. As in the bounded case, we may prove
  \[\langle BAx,x\rangle=|\langle BAx,x\rangle|=|\langle Ax,B^*x\rangle|\leq \|Ax\|\|B^*x\|\leq 2\langle(|A|^2+|B^*|^2)x,x\rangle.\]
  This means that $BA\preceq 2|A|^2+2|B^*|^2$. Since $BA$ is positive, it is
(only) symmetric. Since it is invertible, it follows that $BA$ is
actually self-adjoint (and positive). Thus, by Lemma 10.10 in
\cite{SCHMUDG-book-2012}, "$\preceq$" becomes "$\leq $", that is, we
have established the desired inequality
$BA\leq 2|A|^2+2|B^*|^2$.
Since $BA$ is positive, invertible and $BA$ and $|A|^2+|B^*|^2$ are
self-adjoint, it follows that $|A|^2+|B^*|^2$ is invertible by the
unbounded version of Proposition \ref{invert positivity A B New
Deh-Mortad PRO} (recalled also in the introduction), as wished.

(2): The idea is similar to the previous case. As $AB$ is symmetric
  and invertible, then it is self-adjoint (and positive). Hence
  $B^*A^*\subset (AB)^*=AB$
  and so $D(A^*)=D(B^*A^*)\subset D(AB)$. The main point is to show that
  \[AB\leq 2|A^*|^2+2|B|^2.\]
  Clearly,
  \[D(|A^*|^2+|B|^2)=D(|A^*|^2)=D(AA^*)\subset D(A^*)\subset D(AB).\]
  Now, let $x\in D(AA^*)$. Then
  \begin{eqnarray*}
  & & \langle ABx,x\rangle =\langle Bx,A^*x\rangle
  \leq \|Bx\|\|A^*x\|\\
  & & \leq 2\|Bx\|^2+\|A^*x\|^2
=2\langle Bx,Bx\rangle +2\langle A^*x,A^*x\rangle
=\langle2(|B|^2+|A^*|^2)x,x\rangle.
  \end{eqnarray*}
  As above, $AB\preceq 2|B|^2+2|A^*|^2$ becomes $AB\leq
  2|B|^2+2|A^*|^2$. Thus, the invertibility  of $|B|^2+|A^*|^2$
  follows from that of $AB$, as wished.
\end{proof}

Let's give an explicit and non-trivial application of the previous
result.

\begin{exa}
Let $A$ be defined by $Af(x)=f'(x)$ on the domain
\[D(A)=\{f\in L^2(0,1):~f'\in L^2(0,1)\}.\]
Then $A$ is densely defined and closed (but it is not normal, see
e.g. \cite{Mortad-RCMP-2015}). Also,
\[\text{$A^*f(x)=-f'(x)$ on $D(A^*)=\{f\in L^2(0,1):~f'\in
L^2(0,1),~f(0)=f(1)=0\}$}\] so that
$|A^*|^2=AA^*f(x)=-f''(x)$
with
\[D(AA^*)=\{f\in L^2(0,1):~f''\in
L^2(0,1),~f(0)=f(1)=0\}.\] Let $V$ be the Volterra operator defined
on $L^2(0,1)$, i.e.
\[(Vf)(x)=\int_0^xf(t)dt,~f\in L^2(0,1).\]
Then (see e.g. \cite{Mortad-Oper-TH-BOOK-WSPC})
$|V|^2f=V^*Vf=\sum_{n=1}^{\infty}\lambda_n \langle f,f_n \rangle f_n$
with $(f_n)$ being the eigenvectors corresponding to the distinct
eigenvalues $\lambda_n$ of $V^*V$ (and $f\in L^2(0,1)$).

Now, neither $A$ nor $V$ is invertible. However, $A$ is right
invertible for $D(AV)=L^2(0,1)$ and
$AVf(x)=f(x)$ for $f\in L^2(0,1)$.
This means that $AV$ is positive and invertible. Therefore, the
unbounded operator
\[-\frac{d^2}{dx^2}+|V|^2\]
is invertible on the domain $D(AA^*)$ given above.
\end{exa}

Next, we pass to the invertibility of finite sums.

\begin{lem}(cf. \cite{Costara-Popa-exos-func-analysis-2003-trans-from-Russian})\label{absolute value finite sum scalar INEQ Test} Let
$(A_k)_{k=1,...,n}$ be in $B(H)$ and let $(a_k)_{k=1,...,n}$ be in
$\C$. Then
\[\left\|\sum_{k=1}^na_kA_kx\right\|^2\leq \sum_{k=1}^{n}|a_k|^2\left\langle\sum_{k=1}^{n}A_k^*A_kx,x\right\rangle\]
for all $x\in H$.
\end{lem}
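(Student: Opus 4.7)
The right-hand side rewrites as $\left(\sum_{k=1}^n|a_k|^2\right)\sum_{k=1}^n\|A_k x\|^2$ once we note
\[\left\langle \sum_{k=1}^n A_k^*A_k x,x\right\rangle = \sum_{k=1}^n \langle A_k^*A_k x,x\rangle = \sum_{k=1}^n \|A_k x\|^2.\]
So the inequality to be proved is really
\[\left\|\sum_{k=1}^n a_k A_k x\right\|^2 \leq \left(\sum_{k=1}^n|a_k|^2\right)\left(\sum_{k=1}^n\|A_k x\|^2\right),\]
which is a pure Cauchy--Schwarz statement about the vectors $a_k \in \C$ and $A_k x \in H$.

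My plan is to estimate $\|\sum_k a_k A_k x\|$ by first applying the triangle inequality in $H$ and then Cauchy--Schwarz in $\C^n$. Concretely, set $y_k = A_k x$ and compute
\[\left\|\sum_{k=1}^n a_k y_k\right\| \leq \sum_{k=1}^n |a_k|\,\|y_k\| \leq \left(\sum_{k=1}^n |a_k|^2\right)^{1/2}\left(\sum_{k=1}^n \|y_k\|^2\right)^{1/2},\]
where the second inequality is the standard Cauchy--Schwarz inequality in $\C^n$ applied to $(|a_k|)$ and $(\|y_k\|)$. Squaring both sides and substituting back $\|y_k\|^2 = \langle A_k^*A_k x,x\rangle$ produces the desired bound.

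There is essentially no obstacle here: the computation is one line of triangle inequality plus one line of Cauchy--Schwarz, together with the routine identification $\|A_k x\|^2 = \langle A_k^*A_k x,x\rangle$. The only minor point worth mentioning is that the inner product in the statement is linear in the first variable, so moving the sum out of the inner product is legitimate by linearity alone (no positivity hypothesis on the $A_k^*A_k$ is being invoked beyond the trivial fact that each term is a nonnegative real number).
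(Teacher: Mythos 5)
Your argument is correct and is essentially the paper's own proof: the triangle inequality in $H$ followed by the Cauchy--Schwarz inequality in $\C^n$ applied to $(|a_k|)$ and $(\|A_kx\|)$, then squaring and using $\|A_kx\|^2=\langle A_k^*A_kx,x\rangle$. Nothing further is needed.
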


\begin{proof}
Clearly,
\[\left\|\sum_{k=1}^{n}a_kA_kx\right\|\leq \sum_{k=1}^{n}\left\|a_kA_kx\right\|\leq \left(\sum_{k=1}^{n}|a_k|^2\right)^{\frac{1}{2}}\left(\sum_{k=1}^{n}\|A_kx\|^2\right)^{\frac{1}{2}}\]
for all $x\in H$.
\end{proof}

\begin{cor}
Let $(A_k)_{k=1,...,n}$ be in $B(H)$ and let $(a_k)_{k=1,...,n}$ be
in $\C$. If $\sum_{k=1}^{n}a_kA_k$ is invertible, then so is
$\sum_{k=1}^n|A_k|^2$.
\end{cor}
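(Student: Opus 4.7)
The plan is to apply Lemma \ref{absolute value finite sum scalar INEQ Test} together with Proposition \ref{invert positivity A B New Deh-Mortad PRO} in a single step. Set $T=\sum_{k=1}^{n}a_kA_k$ and $S=\sum_{k=1}^{n}|A_k|^2=\sum_{k=1}^{n}A_k^*A_k$, and put $C=\sum_{k=1}^{n}|a_k|^2$.

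First I would observe that since $T$ is invertible, we cannot have all $a_k=0$, so $C>0$. Also, the invertibility of $T$ is equivalent to the invertibility of the positive operator $|T|^2=T^*T$.

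Next, the whole point of Lemma \ref{absolute value finite sum scalar INEQ Test} is to rewrite the inequality it provides as an operator inequality. Indeed, for every $x\in H$,
\[
\langle T^*Tx,x\rangle=\|Tx\|^2\leq C\left\langle Sx,x\right\rangle,
\]
which is precisely the statement $0\leq T^*T\leq CS$. Since $T^*T$ is positive and invertible, Proposition \ref{invert positivity A B New Deh-Mortad PRO} forces $CS$ to be invertible, and since $C>0$ is a positive scalar, $S=\sum_{k=1}^{n}|A_k|^2$ itself is invertible.

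There is no real obstacle here: the corollary is essentially an immediate repackaging of the lemma into the $0\leq A\leq B$ framework of Proposition \ref{invert positivity A B New Deh-Mortad PRO}. The only point worth noting is the trivial but necessary remark that $C\neq 0$, which rules out the degenerate situation where the hypothesis would be vacuous.
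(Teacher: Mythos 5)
Your proof is correct and follows essentially the same route as the paper: both arguments rest on Lemma \ref{absolute value finite sum scalar INEQ Test} and the observation that invertibility of $T=\sum_{k=1}^n a_kA_k$ forces the quadratic form of $\sum_{k=1}^n|A_k|^2$ to dominate a positive multiple of $\|x\|^2$ (with the same remark that the $a_k$ cannot all vanish). The only cosmetic difference is that the paper concludes via bounded-belowness and self-adjointness of $\sum_{k=1}^n A_k^*A_k$, whereas you package the same inequality as $0\leq T^*T\leq C\sum_{k=1}^n|A_k|^2$ and invoke Proposition \ref{invert positivity A B New Deh-Mortad PRO}.
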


\begin{proof}As $\sum_{k=1}^{n}a_kA_k$ is
invertible, it is bounded below, i.e., for some $\alpha>0$ and all
$x\in H$, we have
$\left\|\sum_{k=1}^na_kA_kx\right\|\geq \alpha \|x\|$.
By Lemma \ref{absolute value finite sum scalar INEQ Test} and by the
self-adjointness of $\sum_{k=1}^{n}A_k^*A_k$, it follows that
$\sum_{k=1}^n|A_k|^2$ is invertible (given that the $a_k$ cannot all
vanish simultaneously!).
\end{proof}

\begin{thm}\label{inver sum A_k B_k THM}
Let $(A_k)_{k=1,...,n}$ and let $(B_k)_{k=1,...,n}$ be in $B(H)$. If
$\sum_{k=1}^nA_kB_k$ is positive and invertible, then
$\sum_{k=1}^n|A_k^*|^2+\sum_{k=1}^n|B_k|^2$ too is invertible.
\end{thm}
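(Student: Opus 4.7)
The plan is to extend the argument used in Proposition \ref{jjj} from a single pair $(A,B)$ to a finite family $\{(A_k,B_k)\}_{k=1}^n$. Concretely, the goal is to establish the operator inequality
$$\sum_{k=1}^n A_kB_k \;\leq\; 2\Bigl(\sum_{k=1}^n |A_k^*|^2 + \sum_{k=1}^n |B_k|^2\Bigr)$$
and then conclude by Proposition \ref{invert positivity A B New Deh-Mortad PRO}.

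To produce this inequality I would fix $x\in H$ and compute $\langle \sum_k A_kB_kx,x\rangle$ as a quadratic form. The positivity of $\sum_k A_kB_k$ forces it to be self-adjoint and makes $\langle \sum_k A_kB_kx,x\rangle$ a nonnegative real number, so it equals its own modulus; this is the \emph{only} place the positivity hypothesis enters. Then I would shift each $A_k$ across the inner product to reach $\sum_k\langle B_kx,A_k^*x\rangle$, apply the triangle inequality, bound each term by $\|B_kx\|\|A_k^*x\|$ via Cauchy--Schwarz, and finish with the elementary estimate $ab\leq 2(a^2+b^2)$. Recognising $\sum_k(\|A_k^*x\|^2+\|B_kx\|^2)$ as $\langle(\sum_k|A_k^*|^2+\sum_k|B_k|^2)x,x\rangle$ gives exactly the displayed quadratic-form inequality, so $\sum_k A_kB_k$ is dominated by $2\bigl(\sum_k|A_k^*|^2+\sum_k|B_k|^2\bigr)$ in the Loewner order.

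With the inequality in hand the conclusion is immediate: the dominating operator is manifestly self-adjoint and positive (being a sum of operators of the form $A_kA_k^*$ and $B_k^*B_k$), and the dominated operator is positive and invertible by hypothesis; Proposition \ref{invert positivity A B New Deh-Mortad PRO} then yields invertibility of $2\bigl(\sum_k|A_k^*|^2+\sum_k|B_k|^2\bigr)$, hence of $\sum_k|A_k^*|^2+\sum_k|B_k|^2$ itself. There is no substantial obstacle: the proof is essentially a bookkeeping extension of the two-term case of Proposition \ref{jjj}, with the finite sum handled simply by pulling the triangle and Cauchy--Schwarz inequalities inside the sum termwise.
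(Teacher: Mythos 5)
Your proposal is correct and follows essentially the same route as the paper: establish the Loewner-order domination of $\sum_{k=1}^nA_kB_k$ by a multiple of $\sum_{k=1}^n|A_k^*|^2+\sum_{k=1}^n|B_k|^2$ via quadratic forms (positivity, triangle inequality, Cauchy--Schwarz) and then invoke Proposition \ref{invert positivity A B New Deh-Mortad PRO}. The only cosmetic difference is that the paper uses the discrete Cauchy--Schwarz inequality on the sums followed by $ab\leq\frac{1}{2}(a^2+b^2)$, getting the constant $\frac{1}{2}$, whereas you use $ab\leq 2(a^2+b^2)$ termwise as in Proposition \ref{jjj}; the constant is irrelevant for the invertibility conclusion.
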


\begin{proof}
Let $x\in H$. Then
\begin{align*}
&\left\langle\sum_{k=1}^nA_kB_kx,x\right\rangle=\left|\left\langle\sum_{k=1}^nA_kB_kx,x\right\rangle\right|=\left|\sum_{k=1}^n\langle A_kB_kx,x\rangle\right|\\
&=\left|\sum_{k=1}^n\langle B_kx,A_k^*x\rangle\right|\leq
\sum_{k=1}^n|\langle B_kx,A_k^*x\rangle|\leq \sum_{k=1}^n\|B_kx\|\|A_k^*x\|\\
&\leq \sqrt{\sum_{k=1}^n\|B_kx\|^2}\sqrt{\sum_{k=1}^n\|A_k^*x\|^2}=\sqrt{\sum_{k=1}^n\langle B_k^*B_kx,x\rangle}\sqrt{\sum_{k=1}^n\langle A_kA_k^*x,x\rangle}
\end{align*}
\begin{align*}
&=\sqrt{\left\langle\sum_{k=1}^nB_k^*B_kx,x\right\rangle}\sqrt{\left\langle\sum_{k=1}^nA_kA_k^*x,x\right\rangle}\\
& \leq
\frac{1}{2}\left(\left\langle\sum_{k=1}^n|B_k|^2x,x\right\rangle+\left\langle\sum_{k=1}^n|A_k^*|^2x,x\right\rangle\right)
&=\frac{1}{2}\left\langle\sum_{k=1}^n(|B_k|^2+|A_k^*|^2)x,x\right\rangle.
\end{align*}
Since $\sum_{k=1}^nA_kB_k$ is positive and invertible, Proposition
\ref{invert positivity A B New Deh-Mortad PRO} gives the
invertibility of $\sum_{k=1}^n|A_k^*|^2+\sum_{k=1}^n|B_k|^2$, as
needed.
\end{proof}

\begin{cor}
Let $(A_k)_{k=1,...,n}$ be in $B(H)$. If $\sum_{k=1}^nA_kA^*_k$ (or
$\sum_{k=1}^nA^*_kA_k$) is invertible, then so is
$\sum_{k=1}^n|A_k^*|^2+\sum_{k=1}^n|A_k|^2$.
\end{cor}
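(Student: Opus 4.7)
The plan is to reduce the corollary to a direct application of Proposition \ref{invert positivity A B New Deh-Mortad PRO}, using the elementary identities $|A_k^*|^2 = A_k A_k^*$ and $|A_k|^2 = A_k^* A_k$ to rewrite the target operator as
\[\sum_{k=1}^n|A_k^*|^2+\sum_{k=1}^n|A_k|^2 \;=\; \sum_{k=1}^n A_k A_k^* + \sum_{k=1}^n A_k^* A_k.\]
Denote the two summands on the right by $S_1$ and $S_2$. Each $A_k A_k^*$ and each $A_k^* A_k$ is positive, so $S_1,S_2\geq 0$ as sums of positives.

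Now suppose the hypothesis gives $S_1$ invertible (the case where $S_2$ is invertible is entirely symmetric). Then $S_1+S_2\geq S_1$, and since $S_1$ is a positive invertible operator, Proposition \ref{invert positivity A B New Deh-Mortad PRO} immediately yields invertibility of $S_1+S_2$, which is the desired claim.

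I would briefly note why the more elaborate Theorem \ref{inver sum A_k B_k THM} does not seem to give a sharper route here. Taking $B_k=A_k^*$ in that theorem makes $\sum A_k B_k=\sum A_k A_k^*$ positive and invertible, but the conclusion reads
\[\sum_{k=1}^n|A_k^*|^2+\sum_{k=1}^n|B_k|^2=2\sum_{k=1}^n A_k A_k^*,\]
which is simply a restatement of the hypothesis rather than an advance toward the mixed sum $S_1+S_2$. The symmetric choice with $A_k$ and $A_k^*$ interchanged is analogous. So Theorem \ref{inver sum A_k B_k THM} contributes nothing beyond the monotonicity argument.

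The main ``obstacle'' is therefore essentially absent: the whole content of the corollary is the observation that adding a positive operator to a positive invertible operator preserves invertibility, once one recognises the absolute-value squares as the already-positive operators $A_k A_k^*$ and $A_k^* A_k$.
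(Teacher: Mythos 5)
Your proof is correct, and it does not follow the paper's route. The paper proves the corollary by observing that $\sum_{k=1}^nA_kA_k^*\geq 0$ and then simply invoking Theorem \ref{inver sum A_k B_k THM} (implicitly with $B_k=A_k^*$), whereas you bypass that theorem entirely and argue directly: writing the target as $\sum_{k=1}^nA_kA_k^*+\sum_{k=1}^nA_k^*A_k$, noting both summands are positive, and applying Proposition \ref{invert positivity A B New Deh-Mortad PRO} to the inequality $\sum_{k=1}^nA_kA_k^*+\sum_{k=1}^nA_k^*A_k\geq \sum_{k=1}^nA_kA_k^*$ (or its symmetric counterpart). Your side remark about Theorem \ref{inver sum A_k B_k THM} is in fact a fair criticism of the paper's own argument: with $B_k=A_k^*$ one has $|B_k|^2=B_k^*B_k=A_kA_k^*=|A_k^*|^2$, so the theorem's conclusion reads that $2\sum_{k=1}^nA_kA_k^*$ is invertible, which merely restates the hypothesis rather than producing the mixed sum $\sum_{k=1}^n|A_k^*|^2+\sum_{k=1}^n|A_k|^2$; to reach the stated conclusion one still needs exactly the monotonicity step you use (note that $|A_k^*|\neq|A_k|$ in general, so no identification rescues the shortcut). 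In other words, your more elementary argument is not only valid but is the step that actually carries the corollary, while the paper's one-line appeal to Theorem \ref{inver sum A_k B_k THM} is, read literally, incomplete.
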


\begin{proof}
First, remember that $AA^*_k\geq0$ for every $k$. Hence clearly
$\sum_{k=1}^nA_kA^*_k\geq0$. Now apply Theorem \ref{inver sum A_k
B_k THM} to get the desired result.
\end{proof}


\begin{thebibliography}{1}

\bibitem{Bikchentaev-invert-sum}
A. M. Bikchentaev, \textit{On invertibility of some operator sums},
Lobachevskii J. Math., \textbf{33/3} (2012) 216-222.

\bibitem{boucif-Dehimi-Mortad-ABS-VALUE}
I. Boucif, S. Dehimi and M. H. Mortad, \textit{On The Absolute Value
of Unbounded Operators},  J. Operator Theory (to appear).
arXiv:1804.00899v2

\bibitem{Costara-Popa-exos-func-analysis-2003-trans-from-Russian}
C. Costara, D. Popa, \textit{Exercises in Functional Analysis,
Kluwer Texts in the Mathematical Sciences}, \textbf{26}, Kluwer
Academic Publishers Group, Dordrecht, 2003.

\bibitem{Dehimi-Mortad-2018}
S. Dehimi, M. H. Mortad, \textit{Right (or left) invertibility of
bounded and unbounded operators and applications to the spectrum of
products}, Complex Anal. Oper. Theory, \textbf{12/3} (2018) 589-597.

\bibitem{Dehimi-Mortad-Reid}
S. Dehimi, M. H. Mortad, \textit{Generalizations of Reid
Inequality}, Mathematica Slovaca (to appear). arXiv:1707.03320.

\bibitem{Kadison-Ringrose}
R. V. Kadison, J. R. Ringrose, \textit{Fundamentals of the Theory of
Operator Algebras}. Vol. I. Elementary Theory. Reprint of the 1983
original. Graduate Studies in Mathematics, \textbf{15}. American
Mathematical Society, Providence, RI, 1997.

\bibitem{Mortad-RCMP-2015}
M. H. Mortad, \textit{A Criterion for the Normality of Unbounded
Operators and Applications to Self-adjointness}, Rend. Circ. Mat.
Palermo (2), \textbf{64} (2015) 149-156.

\bibitem{Mortad-Oper-TH-BOOK-WSPC}
M. H. Mortad, \textit{An Operator Theory Problem Book}, World
Scientific Publishing Co., (to appear in 2018).

\bibitem{mortad-Abs-value-BD}
M. H. Mortad, \textit{On The Absolute Value of The Product and the
Sum of Linear Operators}, Rend. Circ. Mat. Palermo,  II. Ser (to
appear). DOI: 10.1007/s12215-018-0356-8. arXiv:1702.08671.

\bibitem{RUD}
W.~Rudin, \textit{Functional Analysis}, McGraw-Hill Book Co., Second
edition, International Series in Pure and Applied Mathematics,
McGraw-Hill, Inc., New York, 1991.

\bibitem{SCHMUDG-book-2012}
K. Schm\"{u}dgen, \textit{Unbounded Self-adjoint Operators on
Hilbert Space}, Springer. GTM {\bf 265}  (2012).

\bibitem{WEI}
J.~Weidmann, \textit{Linear Operators in Hilbert Spaces} (translated
from the German by J. Sz\"{u}cs). Springer-Verlag. GTM {\bf 68}
(1980).

\end{thebibliography}
\end{document}